\newtheorem{defi}{Definition}[section]
\newtheorem{theo}{Theorem}[section]
\newtheorem{lemma}{Lemma}[section]
\newtheorem{prop}{Proposition}[section]
\newcommand{\subjclass}[2][2020]{%
  \let\@oldtitle\@title%
  \gdef\@title{\@oldtitle\footnotetext{#1 \emph{Mathematics subject classification.} #2}}%
}
\newcommand{\keywords}[1]{%
  \let\@@oldtitle\@title%
  \gdef\@title{\@@oldtitle\footnotetext{\emph{Key words and phrases.} #1.}}%
}
\title{Existence result and free boundary limit of a tumor growth  model with necrotic core}
\author{ Samiha Belmor}
\begin{document}

\subjclass { Primary: 35B45, 35B65, 35D30, 35Q92}
\keywords { Existence;  incompressible limit; free Boundary problem; two cell population; rate of convergence; necrotic.}
\maketitle
\begin{abstract}
We analyze a system of cross-diffusion equations that models the growth of an avascular-tumor spheroid. The model incorporates two nonlinear diffusion effects, degeneracy type and super diffusion. We prove the global existence of weak solutions and justify the convergence towards the free boundary problem of the Hele-Shaw type when the pressure  gets stiff.  We also investigate the convergence rate of the solutions in $L^1-$Lebesgue spaces.

\end{abstract}

\section{Introduction}

We consider a compressible model of tumor growth with initial data. It takes the following form 
\begin{align}
\partial_t n_l- div(n_l \nabla p) &=G(c) n_l-K_{D}(c) n_l, \qquad \text { in } Q_{T} \equiv Q \times(0, T), \label{Eq1} \\
\partial_t n_d - div( n_d\nabla p) &=K_{D}(c) n_l-\mu n_d, \qquad \text { in } Q_{T} \equiv Q \times(0, T), \label{Eqq1}\\
\partial_t c&=d \Delta c-f(c) n_l, \qquad \text { in } Q_{T},   \\
n_{1} \nabla p \cdot \mathbf{n} &=n_{2} \nabla p \cdot \mathbf{n}=0 \qquad \text { on } \Sigma_{T} \equiv \partial Q \times(0, T), \\
c &=c_{\infty} \text { on } \Sigma_{T}, \\
(n_l|_{t=0}, n_d|_{t=0}, c|_{t=0})&=(n_{l}^0, n_{d}^0, c^0), \label{Eq2}
\end{align}
where $Q$  is a bounded domain in $\mathbb{R}^d$ with Lipschitz boundary $\partial Q$, $T$ is a positive number, $\mathbf{n}$ is the unit outward normal to $\partial Q $ and
\begin{equation}\label{Press}
n=n_{l}+n_{d}, \quad P= p_\kappa=\kappa \frac{n}{1-n}, \quad \kappa \geq 0 ,
\end{equation}
 We assume there is a continuous motion of cells within the tumor, we indicate this movement by the velocity fields $\Vec{v}$ so that by Darcy’s law \cite{Pert}, we have 
$$\Vec{v}= -\nabla P  \quad \text{in} \  \ Q, \quad  \kappa >0.$$

This problem was proposed by Ward and King  in \cite{Ward1, Ward2}, and modified later by Shangbin \cite{Cui} which assumes that the tumor is  ball-shaped and all the functions are  radially symmetric in space. In the model the cells
are classified in tow phases: $n_{l}$ refer to live tumor cells and $n_{d}$ refer to dead tumor cells, $c$ represents the concentration of nutrients that the live cells receive from its boundary, $d$ is the diffusion coefficient of nutrient  which is supposed to be a positive constant.  $G(c)$ is the growth rate of tumor cells when nutrient supplement  is at level $c$, $K_D(c)$ is the death rate of tumor cells when nutrient supplement  at level $c$, the dead cells are removed at rate $\mu$ which is a positive constant independent
of $c$. When the oxygen and nutrients are insufficient in the central regions due to the lack of the vessel formation, the cell proliferation rate decrease under the activity of cell killing agents, the inner core of the tumor  will therefore enter into a necrotic state. Note that necrotic is not a reversible process, which means dead cells are never be live cells again, and only live cells consume nutrients with the consumption rate $f(c)$.  Mathematically we have the following  biological assumptions on the growth rate and the reaction terms, which  are quite similar to the one stated in \cite{Cui}.
\begin{itemize}
\item (A1) $f \in C^{1}[0, \infty),$ $f'>0$, and $f(0)=0$.
\item (A2)  $G \in C^{1}[0, \infty),$ $G' >0$, and $G(0)=0$.
\item (A3) $K_{D} \in C^{1}[0, \infty),$ $K_D'<0$, $K_{D}(c) \geq 0$ for $c \geq 0$.
\item (A 4) $ \mu>K_{D}(0)>0$.
\end{itemize}
The condition (A4) means that when there is no nutrient the dead core may not necessarily increase. 
We shall not call the expressions of the functions $f(c)$, $G(c)$ and $K_D(c)$ here, for the reader references, we refer to \cite{Ward1, Ward2}.

Continuum mechanical models are used in the literature to describe the mechanical properties of tissue growth, different models describing tumors consisting of different kinds of cells, we mention, Bertsch  \textit{et al.}  \cite{Ber} considered model for healthy and tumor cells.  Jonathan \textit{et al.} \cite{Pett} constructed a model for proliferating cells and quiescent cells.  Model of non-necrotic tumors was given by Byrne \textit{et al.} in \cite{Byr}. 
One can also model tissue growth  by considering free
boundary models \cite{Avn, Avn2} where  the tumor  treated as an expanding domain in (t) and describe its movement and shape by the motion of the boundary. A specific type of convergence established via the so-called incompressible limit has been used to draw a connection between these two types of models.

The rigorous justification of the incompressible limit has been studied vastly in different contexts relying on the generality of the system, the type of the system ( Navier-Stokes model \cite{Vauc}, Cahn-Hilliard type model \cite{Ebe}, Cahn-Hilliard-Brinkman type model \cite{Ebe},  Keller-Segel model \cite{Elb},  Patlak-Keller-Segel model \cite{Pat} or Parabolic type model \cite{Pert}), the modeling context (Darcy's law, or Brinkman's law \cite{Pert2}), the pressure law (power-law \cite{Liu}, or singular law \cite{Hech2} ) as well as the type of the initial data considered (well-prepared or ill-prepared). The mathematical justification of  the incompressible limit was first initiated by Bénilan and Crandall \cite{Phi} for the filtration equations $\partial_t n =\Delta \varphi$ where $\varphi(s)= s^{\gamma}$ with ill-prepared initial data.  This work are then extended by several authors in different settings, for recent work we mention  \cite{Dav, Liu, CM, Debi, Debii}. The limit model in each case is a Hele-Shaw type free boundary model.
 
 The core of the analysis in this paper is to study the existence assertion for \eqref{Eq1}-\eqref{Eq2}, and establish some uniform  estimates in order to investigate the vanishing limit of solutions as $\kappa \to 0$. Moreover, to the best of our knowledge, we are the first to prove the direct $L^1-$ convergence rate of the solution $(n_0, c_0).$

 We note that by combining \eqref{Eq1} and \eqref{Eqq1}, we obtain the total density $n$ equation, which solves the problem
 \begin{equation}\label{eq-density}
     \partial_t n_{\kappa}- \Delta(H_\kappa(n_{\kappa})) =G(c_{\kappa}) n_{l, \kappa}-\mu n_{d, \kappa}\equiv R_{\kappa} \qquad \text { in }  \ Q_{T}.
 \end{equation}
where 
\begin{equation}\label{H-equation}
H_\kappa \left(n_{\kappa}\right):=\int_{0}^{n_{\kappa}} s p_{\kappa}^{\prime}(s) \mathrm{d} s= p_{\kappa}+\kappa \ln \left(1-n_{\kappa}\right) .
\end{equation}
As a result, if we formally set $\kappa \to 0$, we will encounter the following problem 
$$
\begin{aligned}
\partial_{t} n_{0}-\Delta p_{0} &=G(c_0) n_{l,0}- \mu n_{d,0} \equiv R_{0} \quad \text { in } \mathcal{D'}(Q_{T}), \\
\partial_t n_{l,0}- div(n_{l,0} \nabla p_0) &=G(c_0) n_{l,0} -K_{D}(c_0) n_{l,0}, \qquad \text { in } \mathcal{D'}(Q_{T}) \\
\partial_t n_{d,0} - div( n_{d,0}\nabla p_0) &=K_{D}(c_0) n_{l,0}-\mu n_{d,0}, \qquad \text { in }  \mathcal{D'}(Q_{T})\\
\partial_t c_0 &=d \Delta c_0 -f(c_0) n_{l,0} , \qquad \text { in } Q_{T}  \\
n_{l,0} \nabla p_0 \cdot \mathbf{n} &=n_{d,0} \nabla p_0 \cdot \mathbf{n}=0 \qquad \text { on } \Sigma_{T} \equiv \partial Q \times(0, T), \\
\nabla p_{0} \cdot \mathbf{n} &=0 \text { on } \Sigma_{T}, \\
c &=c_{\infty} \text { on } \Sigma_{T}, \\
n_{0}(x, 0) &=n_{l,0}+ n_{d,0} \text { on } Q .
\end{aligned}
$$

Multiplying the total density equation \eqref{eq-density}  by $p'(n_{\kappa})$ we find an equation of the pressure
\begin{equation}\label{eq-pressur}
    \partial_t p_{\kappa} -(\frac{p_{\kappa}}{\kappa} +p_{\kappa})\Delta p_{\kappa} -|\nabla p_{\kappa}|^2 = \frac{1}{\kappa}(p_{\kappa} + \kappa)^2(n_{l, \kappa} G(c_{\kappa}) -\mu n_{d, \kappa}).
\end{equation}
Passing formally to the limit $\kappa \to 0$ into \eqref{eq-pressur}, we can reach the so-called complementarity relation. 
\begin{equation}\label{Com-relation}
    -p_{0}^2 \Delta p_0 = p_0^2(n_{l,0}G(c_0)-\mu n_{d,0}).
\end{equation}
It is not difficult to derive from \eqref{Press}  that when $\kappa$ goes to $0$, we expect having the relation
$(1-n_0)p_0=0$ in the sens of distribution. This relation leads to decomposition of the domain in two parts. $P(t) =\{x, \ \ p(t,x)>0\}$ and its complementary $P^c(t)=\{x, \ \ p(t,x)=0\}$. Unlike the complementarity relation that was established in \cite{Degond},  the authors were unable to recover the usual relation due to the lack of compactness  in time, nevertheless, using the Aubin-Lions theorem, we may prove the relation \eqref{Com-relation} directly. The proof of the convergence follows from the  following estimate

\begin{equation}\label{J}
 \int_{Q_T} (\partial_t H_{\kappa}(n_{\kappa}))^2  + \sup_{t\in[0,T]}  \int_{Q} |\nabla H_{\kappa}(n_{\kappa})|^2 \leq C,   
\end{equation}
that is $H_{\kappa}$ is bounded function in $L^2 (0,T; W^{1,2}(Q))$. Before we introduce our complete results,  the following assumptions about the model's components are made throughout this paper: We assume that the model is equipped with non-negative and ill-prepared initial data in the sens that   
\begin{align}
P^0_\kappa=p(n_\kappa^0) \in L^\infty(Q),  \ \ n_{\kappa} \in L^1(Q), \ \  \|n_\kappa^0 -n_0^0 \|_{L^1(Q)}\to 0 \quad as \ \ \kappa \to 0. \label{Cond1} \\
c_{\kappa} \in L^1(Q), \ \  \|c_\kappa^0 -c_0^0 \|_{L^1(Q)}\to 0  \quad as  \ \ \kappa \to 0. \label{Cond2}
\end{align}
 \textbf{Main results.} Before stating our main result, we first introduce the definition of the weak solution for the initial-boundary value problem of parabolic system.
\begin{defi} (Weak solution) We call that $(n_l,n_d,c)$ a solution of system \eqref{Eq1}-\eqref{Eq2} on $(0, T )$, if the components $(n_l, n_d, c)$ are all non-negative, and verify
\begin{itemize}
\item (H 1) $n_l, n_d, c \in L^\infty({Q}_T)$,
 \item (H 2) $ p, c \in L^{2}((0, T) ; W^{1,2}(Q)),$ 
\end{itemize}
\end{defi}
and  satisfy \eqref{Eq1}-\eqref{Eq2} in distributional sense. More precisely,  

\begin{equation}
\int_{Q}(n_l \varphi)|_{\xi=t}-\int_{Q} n_{l}^0 \varphi|_{\xi=0}+\int_{Q_{t}}(n_l \nabla p \nabla \varphi) dx dt=\int_{Q_{t}} R_1 \varphi dx dt,
\end{equation}
for all $t \in[0, T]$ and $\varphi \in C^{2}(\overline{Q_{T}})$ such that $\varphi \geq 0$ in $Q_{T}$ and $\varphi|_{\partial Q}=0$. Similarly, for $n_d$, and $c$ we have

\begin{align}
&\int_{Q}(n_d \varphi)|_{\xi=t}-\int_{Q} n_{d}^0 \varphi|_{\xi=0}+\int_{Q_{t}}(n_d \nabla p \nabla \varphi) dx dt=\int_{Q_{t}} R_2 \varphi dx dt, \\
&\int_{Q}(c \varphi)|_{\xi=t}-\int_{Q} c^0 \varphi|_{\xi=0}-d\int_{Q_{t}}(c \partial_{\xi} \varphi+c \Delta \varphi) dx dt +d\int_{0}^{t} \int_{\partial Q} c_{\infty} \partial_{\nu} \varphi = \int_{Q_{t}} F \varphi dx dt,
\end{align}
where, $R_1 \equiv G(c) n_l-K_{D}(c) n_l$, $R_2 \equiv K_{D}(c) n_l-\mu n_d $, $F= f(c)n_{l},$ and $\partial_{\nu}$ is the outward unit normal derivative at the boundary. Note that since we are looking for solutions $(n_l, n_d, c)$ with
 $$(n_l, n_d, c) \in (W^{1,1}((0,T),  W^{-1,1}(Q)))^3,$$
 we have 
  $$(n_l, n_d, c) \in (C((0,T),  W^{-1,1}(Q)))^3.$$
 
\begin{theo}\label{Theo-existance}
(Existence Theorem ) Assume the initial data are non-negative and smooth, namely, that 
\begin{itemize}
\item [(H1)] $c^{0} \in L^{\infty}(Q) \cap H^{1}(Q)$  
\item[(H2)] $n^{0}\in L^{\infty}(Q), \quad n_{d}^0, n_{l}^0 \in L^{\infty}(Q)\cap W_{0}^{1,2}(Q)$, 
\item[(H3)] $n_{l}^0, n_{d}^0, c^0 \geq 0, \quad \left\|n^{0}\right\|_{L^{\infty}(Q)} =1-\theta <1, \ \  \text{for some} \ \ \theta\in (0,1).$
\end{itemize}
Then there is a weak solution to \eqref{Eq1}-\eqref{Eq2}.
We set
\begin{equation}\label{unif}
c_{\max }:=\max \{|c_{0}|_{L^{\infty}(Q)}, |c_{\infty}|_{L^\infty ( \partial Q)} \}, \quad G_{m}= \sup_{\xi \in [0,c_{\max}]} G(\xi), \quad F_m= \sup_{\xi \in [0, c_{\max}]}f(\xi).
\end{equation}
\end{theo}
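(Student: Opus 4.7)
\noindent\textit{Overall strategy.} My plan is to construct a weak solution of the compressible system \eqref{Eq1}--\eqref{Eq2} (for fixed $\kappa>0$) by regularization, Schauder fixed point, uniform estimates, and passage to the limit. The pressure law $p_\kappa(n)=\kappa n/(1-n)$ is singular at $n=1$, so I would first replace it by a smooth globally Lipschitz extension $p_{\kappa,\varepsilon}$ that agrees with $p_\kappa$ on $[0,1-\delta]$ and stays finite for all $n\ge 0$. I would also add a vanishing viscosity $-\nu\Delta n_l,-\nu\Delta n_d$ to the first-order continuity equations so that the regularized system is a coupled quasilinear parabolic system amenable to standard arguments. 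The program is then to produce solutions of the regularized system on a short interval $(0,T)$, extract $(\varepsilon,\nu)$-uniform estimates, and pass to the joint limit.

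\medskip

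\noindent\textit{Approximate solutions.} Given $(\tilde n_l,\tilde n_d,\tilde c)$ in a closed convex subset of nonnegative functions in $L^2(Q_T)^3$, I define $\mathcal{T}$ by: first solving the linear heat equation $\partial_t c-d\Delta c=-f(\tilde c)\tilde n_l$ with Dirichlet data $c_\infty$; then forming the drift $\tilde p=p_{\kappa,\varepsilon}(\tilde n_l+\tilde n_d)$, which is bounded Lipschitz with $\nabla\tilde p\in L^2$; and finally solving the two viscous continuity-diffusion equations $\partial_t n_j-\nu\Delta n_j-\mathrm{div}(n_j\nabla\tilde p)=R_j(\tilde c,\tilde n_l,\tilde n_d)$ with no-flux boundary conditions for $j\in\{l,d\}$. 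Linear parabolic theory handles each subproblem, while Aubin--Lions gives compactness of the induced map, so Schauder's theorem produces a fixed point. Nonnegativity of $n_l,n_d$ is preserved since $R_j\ge 0$ whenever the corresponding density vanishes, and the maximum principle applied to the $c$-equation using (A1) yields $0\le c\le c_{\max}$ on $Q_T$.

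\medskip

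\noindent\textit{Uniform estimates.} These are the heart of the argument. Summing the $n_l$ and $n_d$ equations yields $\partial_t n-\Delta H_{\kappa,\varepsilon}(n)-\nu\Delta n=G(c)n_l-\mu n_d$, and comparison with the ODE $y'=G_m y$, $y(0)=1-\theta$, gives $\|n(t)\|_{L^\infty}\le(1-\theta)e^{G_m t}$; on any $(0,T)$ with $(1-\theta)e^{G_m T}<1$ we therefore have $\|n\|_\infty<1$ uniformly in $\varepsilon,\nu$. This is the crucial saturation control that keeps the pressure law meaningful. Testing the $n$-equation with $p_{\kappa,\varepsilon}(n)$ then produces the energy estimate $\sup_t\int_Q H_{\kappa,\varepsilon}(n)+\int_{Q_T}|\nabla p_{\kappa,\varepsilon}|^2\le C$, and the stronger bound \eqref{J} follows by testing the $H_{\kappa,\varepsilon}$-equation with $\partial_t H_{\kappa,\varepsilon}(n)$ after a further spatial smoothing.

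\medskip

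\noindent\textit{Passage to the limit and main obstacle.} Armed with \eqref{J} and the uniform $L^\infty$ bound on $n$, Aubin--Lions gives a subsequence along which $n,n_l,n_d,c$ converge strongly in $L^2(Q_T)$ and $\nabla p_{\kappa,\varepsilon}(n)$ converges weakly in $L^2(Q_T)$; the products $n_j\nabla p_{\kappa,\varepsilon}(n)$ then converge in $\mathcal{D}'(Q_T)$ by the strong--weak pairing, and because $\|n\|_\infty<1-\theta'$ uniformly, $p_{\kappa,\varepsilon}(n)$ coincides with $p_\kappa(n)$ for small $\varepsilon$. The limit thus satisfies Definition 1.1. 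The principal obstacle is precisely the saturation bound: without $\|n\|_\infty<1$ the pressure law collapses, which restricts the existence time to $T<G_m^{-1}\log\bigl(1/(1-\theta)\bigr)$, and one must verify that the derived constants degrade gracefully as $\kappa\to 0$ in preparation for the incompressible limit treated later in the paper.
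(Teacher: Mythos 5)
Your overall architecture (regularize, solve, estimate uniformly, pass to the limit) matches the paper's, but the way you regularize creates a genuine gap at the most important step. The theorem claims a weak solution on an arbitrary interval $(0,T)$ — the paper's stated goal is \emph{global} existence, and the incompressible-limit theorem consumes the solution on every $(0,T)$ — whereas your saturation control $\|n(t)\|_{L^\infty}\le(1-\theta)e^{G_m t}$ via Gr\"onwall only keeps $n$ below $1$ for $T<G_m^{-1}\log\bigl(1/(1-\theta)\bigr)$. This is not a removable technicality of your write-up: by replacing $p_\kappa$ with a globally Lipschitz extension $p_{\kappa,\varepsilon}$ you have discarded exactly the mechanism that prevents saturation. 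The paper instead truncates the \emph{mobility} $\chi_\epsilon(n)=\kappa(n+\epsilon)/(1-n)^2$ (bounded below by $\kappa\epsilon$, above by $\kappa/\epsilon^2$), so that $H_\epsilon$ still diverges as $n\to1$ when $\epsilon\to0$; Lemma~\ref{nbound} then builds a stationary barrier $\Upsilon_\epsilon=H_\epsilon^{-1}(\varphi)$ from an elliptic problem with bounded data and concludes $n_\epsilon\le H_\epsilon^{-1}(\varphi)<1-\rho$ on all of $Q_T$, uniformly in $\epsilon$, for every $T$. The singular pressure itself supplies the global $L^\infty$ bound; a Lipschitz pressure cannot, and no ODE comparison will close that gap.

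A second, smaller problem is your limit passage in the flux terms. You assert that $n_l,n_d$ converge strongly in $L^2(Q_T)$ by Aubin--Lions and then pair them with the weak limit of $\nabla p$. But after $\nu\to0$ the equations for $n_l$ and $n_d$ are pure transport equations with drift $\nabla p$: there is no diffusion acting on $n_l$ alone, so spatial compactness of $n_l$ is not available, and only weak-$*$ $L^\infty$ convergence is justified. A weak-$*$ limit times a weak limit does not pass to the limit in the product. The paper confronts this head-on by splitting the flux integral into the region $Q_{T,\delta}$ where the total density is below $\delta$ (where the flux is small by the energy bound $\|\sqrt{\chi_{n_{l,\epsilon}}(n_\epsilon)}\,\nabla n_\epsilon\|_{L^2(Q_T)}\le C$) and its complement (where the mobility is bounded below and one can extract an $L^2$-weak limit of the gradient); you would need an analogous device, or an argument giving genuine strong compactness of $n_l$, to make your final step rigorous.
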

We recall that the global existence result was considered in \cite{Berr} for systems similar to  \eqref{Eq1}-\eqref{Eq2}, but with different reaction terms and boundary conditions. Here we take another approach to drive the existence results.
\begin{theo}\label{Theo-inco-limit}(Incompressible Limit)
Let the assumptions of Theorem \ref{Theo-existance} hold. Assume:
\begin{itemize}
    \item [(H 4)] $G'(\xi) < -\alpha , \quad \alpha >0, \quad   \text{and}  \ \ \xi \in [0, c_{\max}].$
    \item [(H 5)] $c_{\infty} =1,\ \ in \ \ \sum_{T}.$
    \item [(H 6)] $n^0_{\kappa}$ is measurable and, $ \int_{Q}\int_{0}^{n^{0}_{\kappa}} H_{\kappa}(\xi)d\xi <\infty .$
    \item [(H 7)] $\partial Q$ is a $C^{1,1}.$
\end{itemize}
Let's denote by $(n_\kappa, n_{l,\kappa}, n_{d,\kappa}, c_\kappa)$ the solution obtained in Theorem \eqref{Theo-existance}. Then as $\kappa \to 0$, we have the following convergences
\begin{gather}
 (n_\kappa, n_{l,\kappa}, n_{d,\kappa}, c_\kappa) \rightharpoonup (n_0, n_{l,0}, n_{d,0}, c_0) \quad w^{*}-\left(L^{\infty}\left(Q_{T}\right)\right)^{4}, \\
  (n_\kappa, n_{l,\kappa}, n_{d,\kappa}, c_\kappa) \rightarrow (n_0, n_{l,0}, n_{d,0}, c_0)  \quad  \text{strongly in}  \ \ (C(0, T; L^2(Q)))^{4}, \label{EQ9} \\
 H_{\kappa} \rightarrow p_{0}  \qquad \text{strongly in} \ \ C(0, T; L^2(Q)), \label{EQ10} \\
\nabla p_{\kappa} \rightharpoonup \nabla p_{0} \quad \text{weakly in} \ \  L^{2}(0, T ; W^{1,2}(Q)), \label{EQ11} \\
\nabla p_{\kappa} \rightarrow \nabla p_{0}  \qquad \text{strongly in}  \ \  L^{2}(0, T ;(L^{2}(Q)))^{d} \label{EQ12}, \\
 \end{gather}
 $(n_0, n_{l,0}, n_{d,0}, c_0)$ is the limit solution, which satisfies
 \begin{align}
     -\int_{Q_T} n_0 \partial_t \varphi dx dt + \int_{Q_T} \nabla p_0 \nabla \varphi dt dx  = \int_{Q_T} R_0 \varphi dx dt \\
     -\int_{Q_T} n_{l,0} \partial_t \varphi dx dt + \int_{Q_T} n_{l,0}\nabla p_0 \nabla \varphi dt dx  = \int_{Q_T} R_{1,0} \varphi  dx dt \\ 
       - \int_{Q_T} n_{d,0} \partial_t \varphi dx dt + \int_{Q_T} n_{l,0} \nabla p_0 \nabla \varphi dx dt=
        \int_{Q_T} R_{2,0} \varphi  dx dt \\ 
         - \int_{Q_T} c_{0} \partial_t \varphi dx dt + d\int_{Q_T}  \nabla c_0 \nabla \varphi dt dx  = -\int_{Q_T} f(c_0) n_{l,0} \varphi  dx dt \\ 
 \end{align}
 
\end{theo}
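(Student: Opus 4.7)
\textbf{Proof plan for Theorem \ref{Theo-inco-limit}.}

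The plan is to push the uniform (in $\kappa$) estimates established for the existence theorem through a compactness argument based on the key bound \eqref{J}, and then to pass to the limit in the distributional formulation. The first step is to collect $\kappa$-uniform a priori estimates: non-negativity and the $L^\infty$ bounds $0\le n_{l,\kappa},n_{d,\kappa}\le n_\kappa\le 1$ (from the pressure construction \eqref{Press} together with hypothesis (H3)) and $0\le c_\kappa\le c_{\max}$ by a standard maximum-principle comparison, plus the consequent $L^\infty$ bounds on $G(c_\kappa), K_D(c_\kappa), f(c_\kappa)$. These already yield the weak-$*$ convergences in $L^\infty(Q_T)$ after extraction. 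The central estimate is \eqref{J}: multiplying \eqref{eq-density} by $\partial_t H_\kappa(n_\kappa)$ (using $\nabla H_\kappa=n_\kappa\nabla p_\kappa$ and $\partial_t H_\kappa = n_\kappa p_\kappa'(n_\kappa)\partial_t n_\kappa$ after an approximation argument), integrating by parts, and using hypothesis (H6) to control initial data, one obtains the bound $\partial_t H_\kappa\in L^2(Q_T)$ and $\nabla H_\kappa\in L^\infty(0,T;L^2(Q))$ uniformly in $\kappa$.

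With these bounds in hand, the Aubin–Lions lemma applied to $H_\kappa$ (bounded in $L^2(0,T;W^{1,2}(Q))$ with $\partial_t H_\kappa$ bounded in $L^2(0,T;L^2(Q))$) yields a strong limit $H_\kappa\to H_0$ in $C([0,T];L^2(Q))$. Writing $H_\kappa=p_\kappa+\kappa\ln(1-n_\kappa)$, the bound $|\kappa\ln(1-n_\kappa)|\le \kappa|\ln\kappa|+\kappa\ln(1+p_\kappa/\kappa)$ together with the $L^\infty$-in-time control on $p_\kappa$ (derived from the pressure equation \eqref{eq-pressur} and (H4), which provides the decay needed to bound $\|p_\kappa\|_\infty$ independently of $\kappa$) shows that $\kappa\ln(1-n_\kappa)\to 0$ in $L^2(Q_T)$. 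Hence $p_\kappa\to p_0:=H_0$ in $L^2(Q_T)$, giving \eqref{EQ10}, and $\nabla p_\kappa\rightharpoonup\nabla p_0$ in $L^2(0,T;L^2(Q))$, which is \eqref{EQ11}. The strong $C([0,T];L^2)$ convergence of $n_\kappa,n_{l,\kappa},n_{d,\kappa},c_\kappa$ in \eqref{EQ9} follows by applying Aubin–Lions to each component: the reaction terms give $L^\infty$ bounds on the time derivatives in $W^{-1,p}$ and the flux terms $\mathrm{div}(n_{l,\kappa}\nabla p_\kappa)$, $\mathrm{div}(n_{d,\kappa}\nabla p_\kappa)$ are bounded in $L^2(0,T;W^{-1,2})$ using the $L^2$-bound on $\nabla p_\kappa$ and the $L^\infty$-bound on $n_{l,\kappa},n_{d,\kappa}$.

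The main obstacle is the strong convergence \eqref{EQ12} of $\nabla p_\kappa$ in $L^2(Q_T)$, which is required to identify the nonlinear fluxes $n_{l,\kappa}\nabla p_\kappa\to n_{l,0}\nabla p_0$ and $n_{d,\kappa}\nabla p_\kappa\to n_{d,0}\nabla p_0$ (products of weakly and merely weakly convergent sequences). The plan is to establish it through an energy identity: test \eqref{eq-density} against $p_\kappa$ to obtain
\begin{equation*}
\int_{Q_T}n_\kappa|\nabla p_\kappa|^2\,dx\,dt = \int_Q\!\!\int_0^{n_\kappa(T)}\!\!p_\kappa(s)\,ds\,dx - \int_Q\!\!\int_0^{n_\kappa^0}\!\!p_\kappa(s)\,ds\,dx + \int_{Q_T}R_\kappa p_\kappa\,dx\,dt,
\end{equation*}
and to pass to the limit in each term using the already-established strong convergences $n_\kappa\to n_0$ in $C([0,T];L^2)$ and $p_\kappa\to p_0$ in $L^2(Q_T)$. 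The right-hand side converges to the corresponding expression for the limit system, while the limit of the left-hand side can be shown (via the relation $\nabla H_\kappa=n_\kappa\nabla p_\kappa$ and the Minty-type lower semicontinuity of $\int n|\nabla p|^2$) to equal $\int_{Q_T}n_0|\nabla p_0|^2$; together with weak convergence of $\nabla p_\kappa$ this upgrades to strong convergence on the set $\{n_0>0\}$, and since $p_0=0$ on $\{n_0<1\}\subset\{n_0=0\ \text{or}\ \nabla p_0=0\ \text{a.e.}\}$ the strong convergence extends to all of $Q_T$.

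With \eqref{EQ12} in hand, passing to the limit in the weak formulations of \eqref{Eq1}, \eqref{Eqq1}, \eqref{eq-density} and the nutrient equation is routine: linear terms pass by weak-$*$ convergence, reaction terms by strong $L^2$ convergence of $n_{l,\kappa},n_{d,\kappa},c_\kappa$ together with continuity of $G,K_D,f,\mu$, and the quadratic flux terms by the product weak$\times$strong convergence made legitimate by \eqref{EQ12}. The initial-data convergence \eqref{Cond1}--\eqref{Cond2} handles the $t=0$ traces thanks to the $C([0,T];L^2)$ regularity. This yields the limit distributional identities stated in the theorem.
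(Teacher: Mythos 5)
Your overall skeleton matches the paper's: uniform $L^\infty$ bounds giving the weak-$*$ limits, the key estimate \eqref{J} obtained by testing the total-density equation \eqref{eq-density} against $\partial_t H_\kappa(n_\kappa)$, Aubin--Lions applied to $H_\kappa$ to get \eqref{EQ10}, and then limit passage in the weak formulations. However, there is a genuine gap in your argument for \eqref{EQ9}. You propose to get strong convergence of $n_{l,\kappa}, n_{d,\kappa}$ in $C(0,T;L^2(Q))$ from an $L^\infty(Q_T)$ bound on the densities together with bounds on $\partial_t n_{l,\kappa}$ in negative Sobolev spaces. This cannot work: Aubin--Lions requires uniform boundedness in a space \emph{compactly} embedded in $L^2(Q)$, and $L^\infty(Q)$ does not embed compactly into $L^2(Q)$ (oscillating bounded sequences have no strongly convergent subsequence). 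Since the equations for $n_{l,\kappa}$ and $n_{d,\kappa}$ are pure transport equations with no self-diffusion, spatial regularity of these components is not free; the paper devotes Lemma \ref{Lemma1} to producing uniform bounds $\sup_{t}\int_Q|\nabla n_{l,\kappa}|^2+\int_{Q_T}(\partial_t n_{l,\kappa})^2\le C$ (and likewise for $n_{d,\kappa}$, $c_\kappa$) by testing each equation against its own time derivative, and these bounds are also used as input to close the estimate \eqref{J} (the source term there involves $\partial_t n_{l,\kappa}$, $\partial_t n_{d,\kappa}$, $\partial_t c_\kappa$ through the auxiliary functional $\Psi$). Without some substitute for these spatial bounds, your \eqref{EQ9} — and with it the identification of the nonlinear reaction terms $G(c_\kappa)n_{l,\kappa}$, $K_D(c_\kappa)n_{l,\kappa}$, $f(c_\kappa)n_{l,\kappa}$ — does not close.

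On \eqref{EQ12} you take a genuinely different route from the paper: you test \eqref{eq-density} against $p_\kappa$ and argue via convergence of the energy $\int_{Q_T}n_\kappa|\nabla p_\kappa|^2$ plus weak convergence (a Radon--Riesz/Minty-type upgrade), in the spirit of Perthame--Quir\'os--V\'azquez, whereas the paper obtains strong compactness of $\nabla H_\kappa$ from $L^\infty(0,T;L^1)$ bounds on $\Delta H_\kappa$ and $\Delta p_\kappa$ together with Aubin--Lions in the complementarity section. Your route is legitimate in principle and arguably cleaner, but as written it only controls $\sqrt{n_\kappa}\,\nabla p_\kappa$; the final step, extending strong convergence of $\nabla p_\kappa$ to the region where $n_0$ vanishes, is asserted rather than proved (on $\{n_0=0\}$ the energy identity gives no information on $\nabla p_\kappa$, and the relation $(1-n_0)p_0=0$ that you invoke must itself first be established in the limit). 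You should either supply that argument or fall back on a compactness bound for $\nabla p_\kappa$ of the type the paper uses.
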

\begin{theo} (Convergence rate) \label{Theo-conv-rate}
 Under the assumptions of  Theorem \ref{Theo-inco-limit}, and the assumptions \eqref{Cond1}-\eqref{Cond2}. Then for all $T>0$ there exists a unique pair function $(n_{0},c_{0}) \in C^{1}(0,T; L^1(Q))$ such that   $(n_{\kappa}, c_{\kappa})$ converges strongly to $(n_{0}, c_{0})$ in $L^{\infty}(0,T; L^1(Q))$ with the following convergence rate 
\begin{equation}\label{Conv-n}
\sup_{t\in (0,T)}\|n_{\kappa}(t)-n_{0}(t)\|_{L^1(Q)} \leq \|n_{\kappa}(0)-n_{0}(0)\|_{L^1(Q)} + G_mt \|n_{\kappa}\|_{L^1(Q)}.
\end{equation} 
\begin{equation}\label{Conv-c}
\sup_{t\in (0,T)}\|c_{\kappa}(t)-c_{0}(t)\|_{L^1(Q)} \leq \|c_{\kappa}(0)-c_{0}(0)\|_{L^1(Q)} + F_m t \|n_{\kappa}\|_{L^1(Q)}.
\end{equation} 

\end{theo}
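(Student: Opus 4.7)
The plan is to prove \eqref{Conv-n} and \eqref{Conv-c} by a Kato-type $L^1$-contraction test on the difference of the compressible and incompressible equations, and to deduce uniqueness of the pair $(n_0,c_0)$ from the resulting contraction. Existence of $(n_0,c_0)$ and the regularity $C^1(0,T;L^1(Q))$ are already encoded in Theorem~\ref{Theo-inco-limit}: the strong convergences \eqref{EQ9}--\eqref{EQ12} identify the limit system and yield the time regularity once the Kato estimate is in hand.

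For \eqref{Conv-n}, I would start from the difference of the total-density equations,
$$
\partial_t(n_\kappa-n_0)-\Delta\bigl(H_\kappa(n_\kappa)-p_0\bigr)=R_\kappa-R_0,
$$
and test against a smooth, nondecreasing, odd regularization $\zeta_\varepsilon$ of $\mathrm{sign}$ applied to the pressure difference $H_\kappa(n_\kappa)-p_0$. The diffusion term produces $\int \zeta_\varepsilon'(H_\kappa(n_\kappa)-p_0)\,|\nabla(H_\kappa(n_\kappa)-p_0)|^2\geq 0$, which is dropped. Since $H_\kappa$ is strictly increasing in $n$ and $p_0$ is the monotone Hele--Shaw graph of $n_0$ supported on $\{n_0=1\}$ by the complementarity relation \eqref{Com-relation}, passing $\varepsilon\to 0$ yields
$$
\tfrac{d}{dt}\|n_\kappa(t)-n_0(t)\|_{L^1(Q)}\leq \int_Q (R_\kappa-R_0)\,\mathrm{sign}(n_\kappa-n_0)\,dx.
$$
Using the structure $R_\kappa=G(c_\kappa)n_{l,\kappa}-\mu n_{d,\kappa}$, the sign restriction (which discards contributions of definite bad sign), the nonnegativity of the death terms, and the uniform bound $G\leq G_m$ on $[0,c_{\max}]$, the right-hand side is controlled by $G_m\|n_\kappa(t)\|_{L^1(Q)}$; integrating in time produces \eqref{Conv-n}.

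The argument for \eqref{Conv-c} is cleaner because $c$ obeys the linear diffusion $d\Delta c$ and both $c_\kappa$ and $c_0$ share the Dirichlet datum $c_\infty$. Testing the difference equation with $\zeta_\varepsilon(c_\kappa-c_0)$ kills the boundary contribution by \textbf{(H\,5)}, the diffusion term is again nonnegative and is dropped, and the source difference is bounded using $f(0)=0$, $f\leq F_m$ on $[0,c_{\max}]$, and $n_{l,\kappa}\leq n_\kappa$, which gives $\tfrac{d}{dt}\|c_\kappa-c_0\|_{L^1}\leq F_m\|n_\kappa\|_{L^1}$ and, after integration in time, \eqref{Conv-c}. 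Uniqueness of $(n_0,c_0)$ is then obtained by applying exactly the same contraction to two candidate limits of Theorem~\ref{Theo-inco-limit}: equal initial data and vanishing $\kappa$-defect force them to coincide, giving the stated $C^1(0,T;L^1(Q))$ regularity.

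The principal obstacle is justifying the sign identification $\mathrm{sign}(H_\kappa(n_\kappa)-p_0)=\mathrm{sign}(n_\kappa-n_0)$ almost everywhere, since $H_\kappa$ and the Hele--Shaw graph are different monotone maps. I would handle it by splitting $Q$ into $\{p_0>0\}$, where $n_0=1$ by \eqref{Com-relation} so that $n_\kappa\leq n_0$ and both sides of the identification are nonpositive, and $\{p_0=0\}$, where $H_\kappa(n_\kappa)-p_0=H_\kappa(n_\kappa)$ and the strict monotonicity of $H_\kappa$ together with $H_\kappa(0)=0$ gives the matching directly. Should measure-theoretic subtleties at the interface $\{p_0=0,\,n_0=1\}$ obstruct this direct route, a Kruzhkov doubling-of-variables argument in time delivers the same stability estimate without requiring pointwise sign-matching.
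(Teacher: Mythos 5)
Your route is genuinely different from the paper's, and it has a gap at precisely the step you yourself flag as the principal obstacle. The Kato argument needs $\zeta_\varepsilon(H_\kappa(n_\kappa)-p_0)$ to converge to $\mathrm{sign}(n_\kappa-n_0)$ a.e.\ on $\{n_\kappa\neq n_0\}$, i.e.\ the monotonicity $(n_\kappa-n_0)\bigl(H_\kappa(n_\kappa)-p_0\bigr)\geq 0$. This fails because $H_\kappa$ and the Hele--Shaw graph are \emph{different} monotone graphs, and your repair on $\{p_0>0\}$ is not correct: there $n_0=1\geq n_\kappa$, but $H_\kappa(n_\kappa)-p_0$ need not be nonpositive. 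Take $p_0$ small and positive and $n_\kappa$ close to $1$; then $H_\kappa(n_\kappa)=p_\kappa+\kappa\ln(1-n_\kappa)\approx \kappa n_\kappa/(1-n_\kappa)$ can exceed $p_0$, so $(n_\kappa-n_0)\bigl(H_\kappa(n_\kappa)-p_0\bigr)<0$ on a set of positive measure and the time-derivative term no longer dominates $\tfrac{d}{dt}\|n_\kappa-n_0\|_{L^1}$. Kruzhkov doubling does not rescue this: doubling handles low regularity, not the mismatch between the two nonlinearities. A secondary issue is that bounding $\int(R_\kappa-R_0)\,\mathrm{sign}(n_\kappa-n_0)$ by $G_m\|n_\kappa\|_{L^1}$ alone requires discarding the contributions of $n_{l,0}$, $n_{d,0}$, $n_{d,\kappa}$, which do not all carry a favourable sign against $\mathrm{sign}(n_\kappa-n_0)$.

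The paper avoids the graph mismatch entirely by never comparing $n_\kappa$ with $n_0$ directly. It compares two \emph{compressible} solutions $n_\kappa$ and $n_{\kappa'}$, for which the flux difference is written as $\Delta(\lambda W)$ with $W=n_\kappa-n_{\kappa'}$ and a single nonnegative secant coefficient $\lambda(t,x)=\int_0^1 H'(\xi n_\kappa+(1-\xi)n_{\kappa'})\,d\xi$, and then runs a B\'enilan--Crandall-type duality: test against the solution $\varphi$ of the regularized dual problem $\partial_t\varphi=(\lambda+\varepsilon)\Delta\varphi$, use the $L^\infty$ and $W^{1,2}$ bounds of Lemma \ref{lemma2} to remove the $\varepsilon$-remainder, take the supremum over $\|\varphi_0\|_{L^\infty}\leq 1$ to recover $\|W(t)\|_{L^1}$, and only then send $\kappa'\to 0$ using the strong $C(0,T;L^2(Q))\hookrightarrow C(0,T;L^1(Q))$ convergence already established in Theorem \ref{Theo-inco-limit}. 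If you wish to keep a contraction-type argument you should adopt this intermediate comparison; otherwise you must quantify the defect $(n_\kappa-n_0)\bigl(H_\kappa(n_\kappa)-p_0\bigr)^-$, which is essentially what forces the $L^{4/3}$-interpolation route of \cite{David}.
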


\subsection{A family of approximate problems and proof of the Theorem \ref{Theo-existance}} 
 We approximate the degenerate problem \eqref{Eq1}-\eqref{Eq2} by a sequence of non-degenerate parabolic problem,  and show that their solutions converge to the solution of the degenerate problem when the regularization parameter  
tends to zero. The regularization is adapted from \cite{Amman}. For small $\epsilon >0$ we define :

\begin{equation}
\chi_{\epsilon}(n):= \begin{cases}
\kappa \epsilon & \text { if } \quad n<0 \\
 \kappa \frac{(n+\epsilon)}{(1-n)^2} & \text { if } \quad 0 \leq n \leq 1-\epsilon \\
  \kappa\frac{1}{\epsilon^{2}} & \text { if } \quad n \geq 1-\epsilon
\end{cases}
\qquad \quad
\chi_{z,\epsilon}(n):= \begin{cases}
\kappa \epsilon & \text { if } \quad n<0 \\
 \kappa \frac{(z+\epsilon)}{(1-n)^2} & \text { if } \quad 0 \leq n \leq 1-\epsilon \\
  \kappa\frac{1}{\epsilon^{2}} & \text { if } \quad n \geq 1-\epsilon
\end{cases}
\end{equation}
thus the approximation of the system \eqref{Eq1}-\eqref{Eq2}  is given by
\begin{align}
\partial_t n_{\epsilon} &= div( \chi_\epsilon(n_{\epsilon}) \nabla n_{\epsilon}) +G(c_{\epsilon}) |n_{l, \epsilon}|- \mu |n_{d, \epsilon}|\equiv R_{\epsilon}, \qquad \text { in } Q_{T} \equiv Q \times(0, T) \label{Eq01} \\
\partial_t n_{l, \epsilon } &= div( \chi_{n_{l,\epsilon}}(n_{\epsilon})\nabla n_{\epsilon}) +G(c_{\epsilon}) |n_{l, \epsilon}|-K_{D}(c_{\epsilon}) |n_{l, \epsilon}|, \qquad \text { in } Q_{T} \equiv Q \times(0, T) \label{Eqa}  \\
\partial_t n_{d, \epsilon} &= div( \chi_{n_{d,\epsilon}}(n_{\epsilon})\nabla n_{\epsilon}) +K_{D}(c) |n_{l, \epsilon}|-\mu |n_{d, \epsilon}|, \qquad \text { in } Q_{T} \equiv Q \times(0, T) \label{Eqaa}\\
\partial_t c_{\epsilon} &=d \Delta c_{\epsilon} -f(c_{\epsilon}) n_{l,\epsilon}, \qquad \text { in } Q_{T} \label{Eqaaa} \\
\chi_{\epsilon}(n_{\epsilon}) \partial_\nu n_{\epsilon}&= \chi_{n_{l,\epsilon}}(n_\epsilon) \partial_\nu n_{l, \epsilon} =  \chi_{n_{d,\epsilon}}(n_{\epsilon})\partial_\nu n_{ d,\epsilon} =0 \qquad \text { on } \Sigma_{T} \equiv \partial Q \times(0, T), \\
c_\epsilon &=c_{\infty} \text { on } \Sigma_{T}, \\
(n_l|_{t=0}, n_d|_{t=0}, c|_{t=0})&=(n_{l, \epsilon}^0, n_{d, \epsilon}^0, c_{\epsilon}^0), \label{Eq02}
\end{align}
which is non-degenerate parabolic. Therefore, the problem \eqref{Eq01}-\eqref{Eq02} possesses a solution, which we denote by ($n_\epsilon, n_{l,\epsilon}, n_{d,\epsilon}, c_\epsilon$).  
\begin{proof} 
We first show the non-negativity and boundedness of all components of the solution to the non-degenerate approximation using the comparison principle (see, \cite{Amman}).
Per hypothesis, the boundary/initial data for our system are non-negative. Moreover, we see that 
$$
R_1(0,n_{d, \epsilon},c_{\epsilon})=f(0)=0,
$$ 
 thus, zero is subsolution to $n_{l, \epsilon}$ and $c_{\epsilon}$. According to (A3), and since $n_{l, \epsilon}$ and $c_{\epsilon}$ are non-negative, we have

$$
R_2(n_{l, \epsilon},0,c_{\epsilon})=K_D(c_\epsilon)n_{l, \epsilon} \geq 0, 
$$
we conclude the non-negativity of $n_{d, \epsilon}$. The non-negativity of $n_\epsilon$ is an immediate consequence.
Now we will establish the $L^\infty$ a priori estimates for $n_\epsilon$ using the so-called true upper-barriers technique; we set

$$n_\epsilon= n_{l, \epsilon}+ n_{d, \epsilon},$$
we introduce the barrier function $n_\omega  =\omega +1$, where $\omega$ is a solution of the  following elliptic problem  
$$
\begin{aligned}
\Delta \omega &=-1 \quad \text { in } Q, \\
\omega|_{\partial Q} &=0 .
\end{aligned}
$$
The maximum principle for elliptic equations \cite{Ren} implies that  $\omega \geq 0$ in $Q$ and 

$$1 \leq n_{\omega}(x) \leq 1+K, \quad x \in Q,$$
for some constant $ K \geq 0$. Furthermore, we observe that $ n^0_{\epsilon}=n_{\epsilon}|_{t=0} \leq n_{\omega}|_{t=0}, \quad 0=n_{\epsilon}|_{\partial Q} \leq n_{\omega}|_{ \partial Q}.$ 
Since $n_{\omega}$ is a time-independent function, it follows that
$$
\begin{aligned}
&\partial_{t} n_{\omega}-div(\chi_{\epsilon}(n_{\omega}) \nabla n_{\omega})- G(c_\epsilon)n_{l, \epsilon}+\mu n_{d, \epsilon}\\
&=0+\frac{\kappa}{\epsilon^{2}}- G(c_\epsilon)n_{\omega}\geq \frac{\kappa}{\epsilon^{2}} - G_m(1+K)\geq 0 \\ 
&= \partial_{t} n_{\epsilon}-div(\chi_{\epsilon}(n_{\epsilon}) \nabla n_{\epsilon})- G(c_\epsilon)n_{l, \epsilon}+\mu n_{d, \epsilon}\\
\end{aligned}
$$
for all sufficiently small $\epsilon >0$. Hence $n_{\omega}$ is an upper-solution for the total density $n_\epsilon$ i.e $n_{\omega} \geq n_{\epsilon}$. It follows that,  by the parabolic comparison principle there exists an $\epsilon_0'$ such that 
$$0 \leq n_\epsilon \leq C^*_{\epsilon}$$ 
for all $0<\epsilon < \epsilon_0'$, and  $C^*_{\epsilon}$  is a positive constant, which also implies
$$0 \leq n_{l, \epsilon},  n_{d, \epsilon}  \leq C^*_{\epsilon}.$$
To show the uniform boundedness of $c_{\epsilon}$, we define the constant 
$c_{\max }:=\max \{|c^0|_{L^{\infty}(Q)}, |c_{\infty}|_{L^\infty ( \partial Q)} \}$ 

$$\partial_{t} c_{\max }- d\Delta c_{\max }+ f(c_{\max}) n_{ l, \epsilon} = f(c_{\max}) n_{l, \epsilon} \geq 0,$$
where we used the nonnegativity of $n_{l, \epsilon}$, this shows that if $c_{\max}$ is sufficiently large, it is an upper solutions for $c_{\epsilon}$, and, thus, the uniform boundedness using the comparison principle.
\end{proof}
The following lemma shows that the total population $n$ is uniformly bounded away from the singularity, i.e., the case $n=1$ is never attained for all sufficiently small $\epsilon > 0$.
\begin{lemma}\label{nbound}
 Under the hypothesis of Theorem \eqref{Theo-existance}, there exist $\rho>0$ and $\epsilon_{0}>0$ such that $n_{\epsilon} \leq 1-\rho$ in $Q_{T}$ for all $0<\epsilon<\epsilon_{0}$.
\end{lemma}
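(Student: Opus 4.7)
The plan is to construct a spatially constant upper barrier $\bar{n}(t)$ that dominates $n_\epsilon$ on $[0,T]$ via a parabolic comparison argument; the bound $n_\epsilon \leq 1-\rho$ then follows by choosing $\rho = 1 - \bar{n}(T) > 0$.

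First I would estimate the reaction term on the right-hand side of \eqref{Eq01}. Using $n_{l,\epsilon} \leq n_\epsilon$, the non-negativity of all components established just above, and the bound $G(c_\epsilon) \leq G_m$ from \eqref{unif}, one has $G(c_\epsilon)|n_{l,\epsilon}| - \mu|n_{d,\epsilon}| \leq G_m\, n_\epsilon$. Thus $n_\epsilon$ satisfies the scalar differential inequality
\begin{equation*}
\partial_t n_\epsilon - \operatorname{div}\bigl(\chi_\epsilon(n_\epsilon)\nabla n_\epsilon\bigr) \leq G_m\, n_\epsilon \qquad \text{in } Q_T,
\end{equation*}
together with the homogeneous no-flux condition $\chi_\epsilon(n_\epsilon)\partial_\nu n_\epsilon = 0$ on $\Sigma_T$.

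Next, I define the spatially constant barrier $\bar{n}(t) := (1-\theta)\, e^{G_m t}$, which solves $\partial_t \bar{n} = G_m \bar{n}$ with $\bar{n}(0) = 1-\theta = \|n^0\|_{L^\infty(Q)}$ by (H3), and which one may assume (by the construction of the approximating initial data) dominates $\|n^0_\epsilon\|_{L^\infty(Q)}$. Since $\bar{n}$ is constant in space, $\operatorname{div}(\chi_\epsilon(\bar{n})\nabla\bar{n}) \equiv 0$ and $\partial_\nu \bar{n} = 0$ both hold trivially. Setting $w := n_\epsilon - \bar{n}$, multiplying the resulting inequality by $w^+ := \max(w,0)$, integrating over $Q$, and integrating by parts (the boundary term vanishes because $\partial_\nu w = \partial_\nu n_\epsilon = 0$) together with $\chi_\epsilon \geq 0$ yields
\begin{equation*}
\tfrac12\,\tfrac{d}{dt}\|w^+\|_{L^2(Q)}^2 + \int_{\{w>0\}} \chi_\epsilon(n_\epsilon)\,|\nabla w|^2\,dx \leq G_m\,\|w^+\|_{L^2(Q)}^2.
\end{equation*}
Since $w^+(0,\cdot) \equiv 0$, Gr\"onwall's lemma forces $w^+ \equiv 0$ on $Q_T$, hence $n_\epsilon(t,x) \leq (1-\theta)\,e^{G_m t}$. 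Choosing $\rho := 1 - (1-\theta)\,e^{G_m T}$ finishes the proof, and $\epsilon_0$ plays no role beyond ensuring the comparison principle applies to the regularized equation.

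The main obstacle is that this argument only delivers a positive $\rho$ when $T < G_m^{-1}\log\bigl(1/(1-\theta)\bigr)$, because the ODE barrier does not exploit the strong diffusion encoded in $\chi_\epsilon(n)$ as $n \to 1^-$. For arbitrary $T$ one would either restrict to this maximal interval and extend by a continuation/bootstrap argument, or replace $\bar{n}$ by a space-dependent supersolution that uses the blow-up of $\chi_\epsilon$ to absorb the exponential growth; I would expect the former route to be the intended one in this paper.
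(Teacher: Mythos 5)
Your short-time argument is sound, but the gap you flag at the end is fatal, and the repair you lean towards (continuation/bootstrap) provably cannot close it. If at time $t_1$ you have only $n_\epsilon(t_1)\le (1-\theta)e^{G_m t_1}=:1-\theta_1$, restarting the ODE barrier from $1-\theta_1$ gives validity for an additional time $G_m^{-1}\log\bigl(1/(1-\theta_1)\bigr)=G_m^{-1}\log\bigl(1/(1-\theta)\bigr)-t_1$, so the total interval covered after any number of iterations is still exactly $G_m^{-1}\log\bigl(1/(1-\theta)\bigr)$. The spatially constant supersolution sees only the reaction term, which genuinely pushes $n$ to $1$ in finite time; no iteration of the same estimate can beat that. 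The only mechanism that keeps $n_\epsilon$ away from $1$ for arbitrary $T$ is the one your barrier discards, namely the blow-up of $\chi_\epsilon(n)$ as $n\to 1^-$, so the second route you mention is not optional --- it is the proof.

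That is what the paper does. It works in the Kirchhoff variable $H_\epsilon(n)=\int_0^n\chi_\epsilon(\xi)\,d\xi$ and takes as supersolution $\Upsilon_\epsilon=H_\epsilon^{-1}(\varphi)$, where $\varphi$ solves the \emph{stationary} elliptic problem $\Delta\varphi=-\lambda_1$ with boundary value $\lambda_2$, $\lambda_1=\sup_\epsilon\|R_\epsilon\|_{L^\infty(Q_T)}$ and $\lambda_2=\sup_\epsilon\|H_\epsilon(n^0)\|_{L^\infty(Q)}$ (finite uniformly in $\epsilon$ precisely because $\|n^0\|_{L^\infty}=1-\theta<1$). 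Then $\partial_t\Upsilon_\epsilon-\Delta H_\epsilon(\Upsilon_\epsilon)=\lambda_1\ge\partial_t n_\epsilon-\Delta H_\epsilon(n_\epsilon)$, the boundary and initial comparisons hold by monotonicity of $H_\epsilon^{-1}$, so $n_\epsilon\le H_\epsilon^{-1}(\varphi)$; and since $\varphi$ is bounded on $Q$ independently of $\epsilon$ while $H_\epsilon\to\infty$ pointwise on approach to $1$, one gets $H_\epsilon^{-1}(\varphi)\le 1-\rho$ for all sufficiently small $\epsilon$ --- valid for every $T$, with no restriction of the form $T<G_m^{-1}\log\bigl(1/(1-\theta)\bigr)$. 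If you want to salvage your write-up, replace the constant-in-space barrier by a space-dependent one of this type; as written, your proof establishes the lemma only on a fixed short time interval and the claimed extension to general $T$ does not go through.
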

\begin{proof}
Let $\epsilon_{0}>0$, we construct a suitable barrier function and consider the elliptic problem
\begin{equation}\label{Eq3}
\begin{aligned}
\begin{cases}
\Delta \varphi(x) &=-\lambda_{1}, \quad  x \in Q, \\
\varphi(x)_{|_{x \in \partial Q}} &=\lambda_{2}.
\end{cases}
\end{aligned}
\end{equation}
The constants $\lambda_{1}$ and $\lambda_{2}$ are defined by
$$
\begin{aligned}
&\lambda_{1}:=\sup_{ 0<\epsilon <\epsilon_0}\left\{ \|R_{\epsilon}\|_{L^{\infty}(Q_{T})}\right\}, \\
&\lambda_{2}:= \sup_{ 0<\epsilon <\epsilon_0}\left\{ \|H_{\epsilon}(n^0)\|_{L^{\infty}(Q)}\right\},
\end{aligned}
$$
where $H_{\epsilon}$ is given by 
\begin{equation}\label{H-epsilon}
H_{\epsilon}(n^0)= \int_{0}^{n^0}\chi_{\epsilon}(\xi)d\xi= \int_0^{n^0}\kappa\frac{\xi+\epsilon}{(1-\xi)^2}d\xi.
\end{equation}
We point out that for sufficiently small $\epsilon_1 <\epsilon_0'$, the constants $\lambda_1$ and $\lambda_2$ can be chosen uniform for all $0<\epsilon_1 <\epsilon_0$. Moreover, 
the solution $\varphi$ of \eqref{Eq3} is bounded on $Q$, and
by the maximum principle for elliptic problems  \cite{Ren} it follows that $\varphi>\lambda_2$ in $Q$.
For $\epsilon<\epsilon_{1}$ , we define $ \Upsilon_{\epsilon}:=H_{\epsilon}^{-1}(\varphi)$ and observe that 

$$\partial_{t} \Upsilon_{\epsilon}-\Delta\left(H_{\epsilon}\left(\Upsilon_{\epsilon}\right)\right)=\lambda_{1}=\left\|R_\epsilon\right\|_{L^{\infty}\left(Q_{T}\right)} \geq \partial_{t} n_{\epsilon}-\Delta\left(H_{\epsilon}\left(n_{\epsilon}\right)\right),$$
in $Q_{T}$. Furthermore, the boundary conditions indicate that
$$
\Upsilon_{\epsilon}|_{\partial Q}=H_{\epsilon}^{-1}(\varphi)|_{\partial Q}=H_{\epsilon}^{-1}(\lambda_{2}) \geq  n_{\epsilon}|_{\partial Q}=0.
$$
Since the function $H_{\epsilon}^{-1}$ is a monotone function, the initial data then satisfies
$$
\Upsilon_{\epsilon}|_{t=0}=H_{\epsilon}^{-1}(\varphi)|_{t=0} \geq H_{\epsilon}^{-1}(\lambda_{2}) \geq H_{\epsilon}^{-1}(H_{\epsilon}(n^{0}))=n^{0}.
$$
As a result, the function $\Upsilon_{\epsilon}$ is an upper solution for $n_{\epsilon}$. Using the fact that $\varphi$ is bounded in $Q$ and that $H_{\epsilon}$ converges point-wise to infinity in the interval $(0,1)$, we conclude that there exist $0<\epsilon_{0} \leq \epsilon_{1}$ and $\rho \in(0,1)$ such that $n_{\epsilon} \leq P_{\epsilon}=H_{\epsilon}^{-1}(\varphi)<1-\rho$ for all $0<\epsilon<\epsilon_{0}$. 
We particularly emphasize that the nondegenerate approximations for the total population are uniformly bounded away from the singularity i.e
$$\left\|n_{\epsilon}(\cdot, t)\right\|_{L^{\infty}(Q)} \leq 1-\rho, \quad \forall \ \ \epsilon<\epsilon_{0}, \ \ t \geq 0.$$
\end{proof}
Next we show that the solutions of the regularized system \eqref{Eq01}-\eqref{Eq02} converge as $\epsilon \to 0$ to a solution of the degenerate problem $(n, n_{l}, n_{d}, c)$. On the other hand we have $\lim_{\epsilon \to \infty} H_{\epsilon}(1)= \infty$, since $n_{\epsilon}$ is uniformly bounded we can find $\beta_{\rho}$ such that $H_{\epsilon}(n_{\epsilon}(t,x))$ is uniformly bounded from above and below for all sufficiently small $\epsilon$ i.e:  
 $$0\leq H_{\epsilon}(n_{\epsilon}(t,x)) < \beta_{\rho} < \infty.$$ 
We write the equation \eqref{Eq01} in terms of the Laplacian
$$\partial_{t} n_{\epsilon}=\Delta H_{\epsilon}(n_{\epsilon})+R_{\epsilon}$$
where $R_{\epsilon} \in L^{\infty}(Q_T)$. For $T>0$, we define the local parabolic cylinder $Q_T: Q \times (T, T+1]$. From the classical theory of parabolic equations \cite{Lad}, it is known that the solution $n_{\epsilon} \in\mathcal{C}^{\alpha}(Q_T)$ for some $\alpha>0$ and
\begin{equation}\label{n-Holder}
\|n_{\epsilon}\|_{\mathcal{C}^{\alpha}(Q_{T})} \leq K(\| R_{\epsilon} \|_{L^{\infty}(Q_{T})}),
\end{equation}
\begin{equation}\label{c-Holder}
\|c_{\epsilon}\|_{\mathcal{C}^{\alpha}(Q_{T})} \leq W(\|F_{\epsilon}\|_{L^{\infty}(Q_{T})}),
\end{equation}
where $K$ and $W$  are non-decreasing functions that depend on the upper and lower bounds
of $H_{\epsilon}$ and $c_{\epsilon}$ respectively. Since $R_{\epsilon}$ is uniformly bounded (relatively to $\epsilon$), $n_{\epsilon}$ is bounded in $\mathcal{C}^\alpha(Q_T)$, which is compactly embedded in $\mathcal{C}(\overline{Q_T})$, thus we have a strong convergence in  $\mathcal{C}(Q_T)-$norm as $\epsilon \to 0$, i.e
 $$n_{\epsilon} \to n_*, \qquad  \text{in} \ \  \mathcal{C}(\overline{Q_T}).$$
Since $n_{l, \epsilon}$ and  $n_{d, \epsilon}$ are uniformly bounded under the $L^\infty(Q_T)$ norm, it follows when $\epsilon \to 0$ 
$$n_{l, \epsilon}\rightharpoonup n_{l,*} \qquad
n_{d, \epsilon}\rightharpoonup n_{d,*} \qquad\ w-* \ \ \text{in} \ \  L^\infty(Q_T).$$
We have $n_{\epsilon}= n_{l, \epsilon}+n_{d,\epsilon}$, and due to the uniqueness of the limit we get  $n_*=n_{l,*}+n_{d,*}.$
It remains to prove now that the functions $n_{l,*}$ and $n_{d,*}$ are indeed solutions of the original system. As a weak solution, $n_{l,*}$ has to satisfy for any $\varphi \in C^2(Q_T)$
\begin{equation}\label{Convv}
\int_{Q}(n_{l,*} \varphi)|_{\xi=T}-\int_{Q} (n_{l,*} \varphi)|_{\xi=\tau} + \int_{Q_{T}}(\chi_{n_{l,*}}(n_*) \nabla n_* \nabla \varphi)=\int_{Q_{t}} (G(c_*) n_{l,*}-K_{d}(c_*)n_{l,*}) \varphi
\end{equation}
where $0 \leq \tau \leq t \leq T$. We know that for any $\epsilon >0$ the following equation is verified for $n_{l,\epsilon}$
\begin{equation}\label{Conv}
\int_{Q}(n_{l,\epsilon} \varphi)|_{\xi=T}-\int_{Q} (n_{l,\epsilon} \varphi)|_{\xi=\tau}+\int_{Q_{T}}( \chi_{n_{l,\epsilon}}(n_\epsilon) \nabla n_{\epsilon}  \nabla \varphi)=\int_{Q_{T}} (G(c_\epsilon) n_{l,\epsilon}-K_{d}(c_\epsilon)n_{l,\epsilon}) \varphi
\end{equation}
 As $\epsilon \to 0$ in \eqref{Conv} the convergence of the third term is not clear, thus we give estimates for the residual term 

\begin{equation}
\Upsilon_{\epsilon}= \int_{Q_{T}}( \chi_{n_{l,*}}(n_*) \nabla n_{*}  \nabla \varphi) -  \int_{Q_{T}}( \chi_{n_{l,\epsilon}}(n_\epsilon) \nabla n_{\epsilon}  \nabla \varphi)
\end{equation} 
and show that $\Upsilon_{\epsilon}$ vanishes as $\epsilon \to 0$. The idea is we treat the region, where the total density is small enough.  For $\delta \in (0,1)$ and sufficiently small $\epsilon_0 >0$ we define the open set 
$$Q_{T,\delta}=\{(x,t) \in Q_T : \ \ (n_{l,\epsilon}+n_{d, \epsilon}) <\delta,   \quad \text{and} \quad (n_{l,*}+n_{d, *}) <\delta,  \quad \forall \epsilon < \epsilon_0  \},$$ 
and decompose $\Upsilon_{\epsilon}$ over  $Q_{T,\delta}$ and its complement $Q_{T,\delta}^c$. To this end, we define the integrals 
\begin{align*}
\Upsilon_{\epsilon}&= I_{\epsilon}(\delta) +J_{\epsilon}(\delta), \\
&:= \int_{Q_{T, \delta}}( \chi_{n_{l,*}}(n_*) \nabla n_{*}  \nabla \varphi) -  \int_{Q_{T, \delta}}( \chi_{n_{l,\epsilon}}(n_\epsilon) \nabla n_{\epsilon}  \nabla \varphi), \\
&+ \int_{Q_{T, \delta}^c}( \chi_{n_{l,*}}(n_*) \nabla n_{*}  \nabla \varphi) -  \int_{Q_{T, \delta}^c}( \chi_{n_{l,\epsilon}}(n_\epsilon) \nabla n_{\epsilon}  \nabla \varphi), 
\end{align*}
\textbf{ $\bullet$ Istimates for $I_{\epsilon}(\delta)$}: 
Multiplying the equation  \eqref{Eq01}  by $n_{\epsilon}$ and integrating over $Q$, we obtain

$$
\frac{1}{2} \frac{d}{dt}\|n_{\epsilon}\|_{L^2(Q)}^2+ \int_{Q} \chi_{\epsilon}(n_{\epsilon})|\nabla n_{\epsilon}|^2dx= \int_{Q} (G(c_{\epsilon})n_{l, \epsilon}- \mu n_{d,\epsilon})n_{\epsilon},
$$
integrating  over time we get 
$$
\frac{1}{2} \|n_{ \epsilon}(T)\|_{L^2(Q)}^2+  \int_{Q_T} \chi_{\epsilon}(n_{\epsilon})|\nabla n_{\epsilon}|^2dt dx= \int_{Q_T} (G(c_{\epsilon}) n_{l, \epsilon} - \mu n_{d, \epsilon})n_{\epsilon} dt dx + \frac{1}{2} \|n_{ \epsilon}^0\|_{L^2(Q)}^2.
$$
From the uniform boundness of the cell density, we deduce
$$
 \int_{Q_T} \chi_{n_{l,\epsilon}}(n_{\epsilon})|\nabla n_{\epsilon}|^2dt dx \leq   \int_{Q_T}  \chi_{\epsilon}(n_{\epsilon})|\nabla n_{\epsilon}|^2dt dx \leq  \int_{Q_T} (G(c_{\epsilon}) n_{l, \epsilon} - \mu n_{d, \epsilon})n_{\epsilon} dt dx + \frac{1}{2} \|n_{\epsilon}^0\|_{L^2(Q)}^2 <C,
$$
for some constant $C>0$. From Lemma \ref{nbound}, the constant $C$ is independent of $\epsilon >0$.
Moreover, we can write 
$$
\chi_{n_{l,\epsilon}}(n_{\epsilon})|\nabla n_{\epsilon}|^2= \sqrt{ \chi_{n_{l,\epsilon}}(n_{\epsilon})} \sqrt{\chi_{n_{l,\epsilon}}(n_{\epsilon})} |\nabla n_{\epsilon}|^2,
$$
 thus we have shown
\begin{equation}\label{Q-T}
 \left \| \sqrt{\chi_{n_{l,\epsilon}}(n_{\epsilon})}\nabla n_{\epsilon}\right \|_{L^2(Q_T)} \leq C,
\end{equation} 
this allows us to use H\"older’s inequality to estimate the second integral of $I_{\epsilon}$ 
\begin{align*}
\left | \int_{Q_{T}}\chi_{n_{l,\epsilon}}(n_{\epsilon}) \nabla n_{\epsilon} \nabla \varphi dx dt \right| &\leq \left \|\sqrt{\chi_{n_{l,\epsilon}}(n_{\epsilon})}\nabla n_{\epsilon} \right \|_{L^2(Q_T)} \left \| \sqrt{\chi_{n_{l,\epsilon}}(n_{\epsilon})} \nabla \varphi \right \|_{L^2(Q_{\delta,T})}\\
&\leq C \left( \int_{Q_{\delta, T}}  \kappa\frac{n_{l, \epsilon}+\epsilon}{(1-n_{\epsilon})^2} |\nabla \varphi|^2\right)^{\frac{1}{2}} \leq C \frac{\sqrt{3\delta}}{(1-2\delta)}\|\varphi\|_{L^2((0,T), H^1(Q))}.
\end{align*} 
 We follow the same steps to estimate the second term of the integral $I_{\epsilon}(\delta)$, therefore the existence of $\epsilon_0 >0$ such that for all sufficiently small $\epsilon <\epsilon_0$
\begin{equation}\label{I-est}
I_{\epsilon}(\delta) \leq C \|\varphi\|_{L^2((0,T),H^1(Q))}.
\end{equation}
Next we estimate $J_{\epsilon}(\delta)$ for $\delta >0$.  Restricted to the domain $Q_{T, \delta}^c= Q_T \backslash \overline{Q_{T, \delta}}$. The solution $n_{\epsilon}$ satisfy the estimate of Lemma \ref{nbound} and \eqref{n-Holder} uniformly,  and the H\"older exponent $\alpha$ is independent of $\epsilon >0$. 
Indeed, if $\epsilon > 0$ is sufficiently small, then $n_{\epsilon} >\delta$ in the region $Q_{T, \delta}^c$. Consequently, the term 
\begin{equation}\label{Q-Tc}
\kappa\frac{\delta}{2} \leq (\kappa\frac{\delta}{2}+\epsilon)\leq \chi_{ n_{l,\epsilon}}(n_{\epsilon}) \leq \frac{1}{\eta^2} \qquad (x, t)\in Q_{T, \delta}^c.
\end{equation}
is uniformly bounded from above and below by a positive constant which is independent of $\epsilon > 0$.
Now  we need to show that
\begin{equation}
\nabla n_{l,\epsilon} \rightharpoonup \nabla n_{l,*} \qquad L^2(Q_{T, \delta}^c).  
\end{equation}
From \eqref{Q-Tc}, and \eqref{Q-T} we have
$$
\int_{Q_{T, \delta}^c} |\nabla n_{l, \epsilon}|^2 \leq \frac{C}{\delta},
$$
where $C$ is a constant independent of $\epsilon$. This implies weak-convergence for a subsequence in $L^2(Q_{T,\delta}^c)$. Moreover, we have  $\nabla n_{l, \epsilon}$
converges in $\mathcal{D'}(Q_{T, \delta}^c)$ to $\nabla n_{l, *}$, because of the uniform convergence in $\mathcal{C}(Q_{T, \delta})$, the limit in $\mathcal{D'}(Q_{T,\delta}^c)$ is unique and, thus, we obtain the weak convergence of $\nabla n_{l, \epsilon}$ to $\nabla n_{l,*}$ over $Q_{T, \delta}^c$.
To resume, from Lemma \ref{nbound} we have as $\epsilon \to 0$  
$$\chi_{n_{l,\epsilon}}{(n_{\epsilon})} \longrightarrow \chi_{n_{l,*}}(n_{*}) \qquad in \ \ \mathcal{C}(Q_T),$$
and from the weak convergence
of  $\nabla n_{l, \epsilon}$ to $\nabla n_{l, *}$, we obtain finally for every $\delta >0$,
$$
\lim_{\epsilon \to 0} J_{\epsilon} (\delta)=0.
$$

To conclude the proof, we pick any $\eta >0$. From \eqref{I-est}
we obtain that there exists $\delta(\eta)$, and $\epsilon \leq \epsilon_0( \delta (\eta
))$, such that $I_{\epsilon}(\delta(\eta))\leq \frac{\eta}{2}$. Since $J_{\epsilon}(\delta) \to 0$ there exist $\epsilon_1(\eta)$ such that $J_{\epsilon}(\delta(\eta))\leq \frac{\eta}{2}$ for small enough $\epsilon < \epsilon_1(\delta (\eta))$. $\Upsilon_{\epsilon}= I_{\epsilon}+J_{\epsilon}$ is independent of $\delta$.

$\forall \eta>0, \quad \exists \epsilon(\eta)= \min \{\epsilon_0(\eta), \epsilon_1(\eta)\}$, such that for $\epsilon \leq \epsilon(\eta)$, we have $R_{\epsilon} \leq \eta$. Thus $R_{\epsilon}\to 0$  as $\epsilon \to 0$. The same procedure can be carried out for the remaining
components.


\section{Incompressible limit as $\kappa \to 0$ (Proof of Theorem \ref{Theo-inco-limit})}
This section is devoted to the proof of the incompressible limit, i.e when $\kappa \to 0$. Thanks to the result proven in the previous section, cf. Theorem \ref{Theo-existance}  we know that for each $\kappa >0 $ there exists  $(n_{\kappa},n_{l,\kappa},n_{d, \kappa},c_{\kappa})$  that verify \eqref{Eq1}-\eqref{Eq2} in a weak sens 
\begin{align}
\partial_{t} n_{\kappa}- \Delta (H_{\kappa}(n_{\kappa}) &=G(c_\kappa) n_{l,\kappa}- \mu n_{d,\kappa} \equiv R_{\kappa} \quad \text { in } \mathcal{D'}(Q_{T}), \label{Q1} \\
\partial_t n_{l,\kappa}- div(n_{l,\kappa} \nabla p_\kappa) &=G(c_\kappa) n_{l,\kappa} -K_{D}(c_\kappa) n_{l,\kappa}, \qquad \text { in } \mathcal{D'}(Q_{T}) \label{Q2} \\
\partial_t n_{d,\kappa} - div( n_{d,\kappa}\nabla p_\kappa) &=K_{D}(c_\kappa) n_{l,\kappa}-\mu n_{d,\kappa}, \qquad \text { in }  \mathcal{D'}(Q_{T})\label{Q3}\\
\partial_t c_\kappa &=d \Delta c_\kappa -f(c_\kappa) n_{l,\kappa} , \qquad \text { in } \mathcal{D'}(Q_{T})  \label{Q4}\\
n_{l,\kappa} \nabla p_\kappa \cdot \mathbf{n} &=n_{d,\kappa} \nabla p_\kappa \cdot \mathbf{n}=0 \qquad \text { on } \Sigma_{T} \equiv \partial Q \times(0, T), \label{Q5}\\
c &=c_{\infty} \text { on } \Sigma_{T}, \label{Q6} \\
\left(n_{\kappa}, n_{\kappa}, n_{\kappa}, c_{\kappa}\right)\big|_{t=0}  &=\left(n_{\kappa}(x,0)+\kappa ,n_{l,\kappa}(x, 0)+\kappa,  n_{d,\kappa}(x, 0)+\kappa, c_{\kappa}(x,0)+\kappa\right) \text { on } Q \label{Q7}.
\end{align}
From the previous section, we have 
\begin{equation}\label{Q8}
n_{1, \kappa}\geq 0, \ \ n_{d,\kappa} \geq 0, \ \ c_{\kappa} \geq 0, \ \ n_{\kappa} \leq n_{M}, \ \ c_{\kappa} \leq c_{\max},
\end{equation} 
where $n_{M}$ is independent of $\kappa$. Thus as $\kappa \to 0,$ we have 
\begin{equation}
(n_{\kappa} , n_{d,\kappa}, n_{l,\kappa}, c_{\kappa} ) \rightharpoonup (n_{0} , n_{d,0}, n_{l,0}, c_{0} ) ,\qquad w^*-L^{\infty}(Q_T).
\end{equation}
which also implies 
\begin{equation}
R_{\kappa} \rightharpoonup R_{0} =G(c_{0})n_{l, 0}-\mu n_{d, 0}, \qquad w^*-L^{\infty}(Q_T).
\end{equation} 
\begin{lemma}\label{Lemma1}
Let the assumptions of Theorem \ref{Theo-existance} and (A1)-(A4) hold, the following estimates hold true for all $T>0$, with constants $C=C(T)>0$
\begin{align}
\int_{Q_{T}}(\partial_{t} c_{\kappa})^{2} &+ \sup _{t \in[0, T]} \int_{Q}|\nabla c_{\kappa}|^{2}\leq C. \label{EqC}\\
 \int_{Q_{T}} (\partial_{t} n_{l,\kappa})^{2}& + \sup_{t \in[0, T]} \int_{Q}|\nabla n_{l,\kappa}|^2 \leq C. \label{EqL}
 \\
\int_{Q_{T}} (\partial_{t} n_{d,\kappa})^{2}& + _{t \in[0, T]} \int_{Q} |\nabla n_{d,\kappa}|^2 \leq C. \label{EqD}
\end{align}
\end{lemma}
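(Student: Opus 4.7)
My plan is to prove the three estimates independently, each by a parabolic energy argument tailored to the structure of the corresponding equation, leveraging the a priori bound \eqref{J}, the uniform $L^{\infty}$ bounds from Theorem~\ref{Theo-existance}, and Lemma~\ref{nbound}.

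For the estimate \eqref{EqC} on $c_{\kappa}$, I would exploit the fact that the boundary datum $c_{\infty}$ is time-independent so that $\partial_t c_{\kappa}$ vanishes on $\partial Q$. Testing $\partial_t c_{\kappa} = d\Delta c_{\kappa} - f(c_{\kappa}) n_{l,\kappa}$ with $\partial_t c_{\kappa}$ and integrating by parts yields
\[
\int_Q (\partial_t c_{\kappa})^2 + \frac{d}{2}\frac{d}{dt}\int_Q |\nabla c_{\kappa}|^2 = -\int_Q f(c_{\kappa})\,n_{l,\kappa}\,\partial_t c_{\kappa}.
\]
Young's inequality absorbs half of $\|\partial_t c_{\kappa}\|_{L^2}^2$ into the left-hand side; the remaining factor is controlled by the $L^{\infty}$ bound on $c_{\kappa}, n_{l,\kappa}$ and the continuity of $f$. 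Integrating in time and using $c^0\in H^1(Q)$ from (H1) closes the estimate.

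For \eqref{EqL} and \eqref{EqD}, I would proceed identically for both components, so let me focus on $n_{l,\kappa}$. The obstacle is the transport/cross-diffusion term $\mathrm{div}(n_{l,\kappa}\nabla p_{\kappa})$, which I would rewrite in non-conservative form as $\nabla n_{l,\kappa}\cdot\nabla p_{\kappa}+n_{l,\kappa}\Delta p_{\kappa}$. To give $\Delta p_{\kappa}$ meaning in $L^2(Q_T)$, I would first upgrade the regularity of $H_{\kappa}$: since $\nabla H_{\kappa}(n_{\kappa})=n_{\kappa}\nabla p_{\kappa}$, the no-flux condition on the total density becomes the homogeneous Neumann condition $\nabla H_{\kappa}\cdot\mathbf n=0$ on $\partial Q$; the total-density equation $\partial_t n_{\kappa}-\Delta H_{\kappa}(n_{\kappa})=R_{\kappa}$ combined with \eqref{J} (which gives $\partial_t H_{\kappa}\in L^2$) and the $L^{\infty}$ control on $R_{\kappa}$ yields $\Delta H_{\kappa}\in L^2(Q_T)$; then elliptic regularity for Neumann problems on a $C^{1,1}$ domain (assumption (H7)) lifts $H_{\kappa}$ into $L^2(0,T;H^2(Q))$. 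Lemma~\ref{nbound} ensures $n_{\kappa}\le 1-\rho$ uniformly in $\kappa$, so $\kappa\ln(1-n_{\kappa})$ is $C^2$ with $\kappa$-uniform bounds, and the identity $H_{\kappa}=p_{\kappa}+\kappa\ln(1-n_{\kappa})$ transfers the $H^2$ regularity to $p_{\kappa}$.

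With $\Delta p_{\kappa}\in L^2(Q_T)$ in hand, I would test the equation for $n_{l,\kappa}$ with $-\Delta n_{l,\kappa}$, using the initial trace $n_{l}^0\in W_0^{1,2}(Q)$ from (H2) to identify the time derivative term as $\tfrac12\tfrac{d}{dt}\|\nabla n_{l,\kappa}\|_{L^2}^2$. The transport term is estimated by Cauchy-Schwarz and Young's inequality, producing a Gronwall-type inequality for $\|\nabla n_{l,\kappa}\|_{L^2}^2$ whose right-hand side is integrable in time; this yields the desired $L^{\infty}(0,T;L^2(Q))$ bound on $\nabla n_{l,\kappa}$. The $L^2(Q_T)$ bound on $\partial_t n_{l,\kappa}$ is then read off directly from the PDE: writing $\partial_t n_{l,\kappa}=\nabla n_{l,\kappa}\cdot\nabla p_{\kappa}+n_{l,\kappa}\Delta p_{\kappa}+R_1$, every term on the right is now in $L^2(Q_T)$ by the previous step, the $H^2$-regularity of $p_{\kappa}$, and the $L^{\infty}$ bounds. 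The case of $n_{d,\kappa}$ is identical in structure, with $R_1$ replaced by $R_2$.

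The main obstacle is obtaining $\Delta p_{\kappa}\in L^2(Q_T)$ with a $\kappa$-uniform constant, since this is what makes the nonconservative transport term tractable. This is precisely where the interplay of \eqref{J}, the $C^{1,1}$ boundary assumption (H7), and the singularity-avoidance Lemma~\ref{nbound} becomes essential: without any one of them, the chain $\partial_t H_{\kappa}\in L^2 \Rightarrow \Delta H_{\kappa}\in L^2 \Rightarrow H_{\kappa}\in L^2(0,T;H^2) \Rightarrow p_{\kappa}\in L^2(0,T;H^2)$ breaks, and the estimates do not close uniformly in $\kappa$.
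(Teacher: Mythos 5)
Your treatment of \eqref{EqC} is exactly the paper's argument (test the $c_{\kappa}$-equation with $\partial_t c_{\kappa}$, integrate by parts, Young, use the $L^{\infty}$ bounds and $c^0\in H^1$), so that part is fine. For \eqref{EqL} and \eqref{EqD} you depart from the paper, which simply tests the $n_{l,\kappa}$- and $n_{d,\kappa}$-equations with $\partial_t n_{l,\kappa}$ and $\partial_t n_{d,\kappa}$ respectively — the same energy argument as for $c_{\kappa}$ — and never needs $\Delta p_{\kappa}$ at all. Your alternative route has genuine gaps.

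First, there is a circularity: the bound \eqref{J}, i.e.\ \eqref{EqN}, is proved in the paper \emph{after} Lemma \ref{Lemma1} and explicitly uses \eqref{EqC}--\eqref{EqD} as input (``Thanks to the estimates provided in Lemma \ref{Lemma1}\dots''), so you may not invoke it here. Second, even granting $\partial_t H_{\kappa}\in L^2(Q_T)$, your chain to $\Delta H_{\kappa}\in L^2(Q_T)$ fails uniformly in $\kappa$: the identity $\Delta H_{\kappa}(n_{\kappa})=\partial_t n_{\kappa}-R_{\kappa}$ requires control of $\partial_t n_{\kappa}$, and $\partial_t n_{\kappa}=\partial_t H_{\kappa}/H_{\kappa}'(n_{\kappa})$ with $H_{\kappa}'(n)=\kappa n(1-n)^{-2}=O(\kappa)$ (and also vanishing where $n_{\kappa}$ is small), so the conversion costs an unbounded factor as $\kappa\to 0$. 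A $\kappa$-uniform $L^2$ bound on $\Delta p_{\kappa}$ is essentially an Aronson--B\'enilan-type estimate and cannot be obtained this cheaply; indeed the paper only ever gets $\Delta p_{\kappa}$ bounded in $L^{\infty}(0,T;L^1(Q))$, in the complementarity section, and again only after Lemma \ref{Lemma1}. Third, even if $\Delta p_{\kappa}\in L^2(Q_T)$ were available, testing the pure transport equation $\partial_t n_{l,\kappa}=\mathrm{div}(n_{l,\kappa}\nabla p_{\kappa})+R_1$ with $-\Delta n_{l,\kappa}$ does not close: the left-hand side produces only $\tfrac12\tfrac{d}{dt}\|\nabla n_{l,\kappa}\|_{L^2}^2$ with no dissipative $\|\Delta n_{l,\kappa}\|_{L^2}^2$ term (the equation has no Laplacian of $n_{l,\kappa}$), while the right-hand side contains $\int\nabla n_{l,\kappa}\cdot\nabla p_{\kappa}\,\Delta n_{l,\kappa}$ and $\int n_{l,\kappa}\,\Delta p_{\kappa}\,\Delta n_{l,\kappa}$, which are quadratic in $\Delta n_{l,\kappa}$-type quantities and cannot be absorbed by Young's inequality; $\nabla p_{\kappa}$ is not in $L^{\infty}$, so no Gronwall inequality for $\|\nabla n_{l,\kappa}\|_{L^2}^2$ results. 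You also leave the boundary terms generated by the integration by parts against $-\Delta n_{l,\kappa}$ unaddressed ($n_{l,\kappa}$ carries a no-flux condition on $n_{l,\kappa}\nabla p_{\kappa}$, not on $\nabla n_{l,\kappa}$). The fix is to abandon the $-\Delta n_{l,\kappa}$ multiplier and argue as the paper does, directly with the multiplier $\partial_t n_{l,\kappa}$.
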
 
\begin{proof}
We multiply the  equation \eqref{Q4} by  $\partial_{\xi} c_{\kappa}$ and integrate in space and time, 
$$
\int_{\tau}^{t} \int_{\Omega} (\partial_{\xi} c_{\kappa})^{2}=-\frac{d}{2} \int_{\tau}^{t} \int_{\Omega} \partial_{\xi}|\nabla c_{\kappa}|^{2}+\int_{\tau}^{t} \int_{\Omega} \partial_{\xi} c_{\kappa} f(c_{\kappa}) n_{\kappa, l},
$$
where $0 \leq \tau \leq t \leq T$. We apply Young's inequality on the second r.h.s 
$$
\begin{aligned}
&\int_{\tau}^{t} \int_{Q}|\partial_{\xi} c_{\kappa}|^{2}+\frac{d}{2} \int_{Q}|\nabla c_{\epsilon}|^{2}|_{\xi=t} \\
&=\frac{d}{2} \int_{Q}|\nabla c_{\kappa}|^{2}|_{\xi=\tau}+\int_{\tau}^{t} \int_{Q} \partial_{\xi} c_{\kappa} f(c_{\kappa}) n_{\kappa,l} \\
&\leq\frac{d}{2} \int_{Q}|\nabla c_{\kappa}|^{2}|_{\xi=\tau}+ \frac{1}{2}\int_{\tau}^{t} \int_{Q}|\partial_{\xi} c_{\kappa}|^{2}+ \frac{1}{2}\int_{\tau}^{t} \int_{Q} |f(c_{\kappa}) n_{\kappa,l}|^2,
\end{aligned}
$$
Letting $\tau$ be zero, we get 
$$
\begin{aligned}
& \frac{1}{2} \int_{Q_{t}}|\partial_{\xi} c_{\kappa}|^{2}+\frac{1}{2} \int_{Q}|\nabla c_{\kappa}|^{2}|_{\xi=t} \\
&\leq \frac{1}{2} \int_{Q}|\nabla c^{0}|^{2}+\frac{1}{2}\int_{Q_{t}}|f(c_{\kappa})n_{\kappa, l}|^{2},
\end{aligned}
$$
\eqref{unif} and \eqref{Q8}  implies that
\begin{equation}\label{EQ3}
\int_{Q_{T}}\left|\partial_{t} c_{\kappa}\right|^{2} \leq C^{\prime}, \quad \sup _{t \in[0, T]} \int_{Q}\left|\nabla c_{\kappa}\right|^{2} \leq C'.
\end{equation}
To derive the estimates for  $n_{l,\kappa}$, we multiply equation \eqref{Q2} by $\partial_{\xi}n_{l, \kappa}$ and integrate in space and time we get
\begin{align*}
\int_{\tau}^{t} \int_{Q}|\partial_{\xi} n_{l,\kappa}|^{2}=- \int_{\tau}^{t} \int_{Q} n_{l, \kappa} \nabla p_{\kappa}\partial_{\xi}(\nabla n_{l,\kappa})+\int_{\tau}^{t} \int_{Q} \partial_{\xi} n_{l,\kappa} \left(G(c_{\kappa})n_{l, \kappa}- K_D(c_{\kappa})n_{l, \kappa}\right).
\end{align*}
Young's inequality implies
\begin{align*}
\int_{\tau}^{t} \int_{Q}|\partial_{\xi} n_{l,\kappa}|^{2} + \frac{1}{2} \int_{Q} |\nabla n_{l,\kappa}|^2|_{\xi=t} & \leq -\frac{1}{2} \int_{\tau}^{t} \int_{Q} |n_{l, \kappa} \nabla p_{\kappa}|^2 +  \frac{1}{2}\int_{Q}  |\nabla n_{l,\kappa}|^2|_{\xi=\tau} \\
&+ \frac{1}{2}\int_{\tau}^{t} \int_{Q} |\partial_{\xi} n_{l,\kappa}|^2 +  \frac{1}{2} \int_{\tau}^t \int_{Q} |\left(G(c_{\kappa})n_{l, \kappa}- K_D(c_{\kappa})n_{l, \kappa}\right)|^2,
\end{align*}
letting $\tau =0$, we obtain 

\begin{align*}
\frac{1}{2} \int_{Q_T}|\partial_{\xi} n_{l,\kappa}|^{2} + \frac{1}{2} \int_{Q} |\nabla n_{l,\kappa}|^2 & \leq  \frac{1}{2}\int_{Q}  |\nabla n_{l,\kappa}^0|^2 +\frac{1}{2} \int_{Q_T} |\left(G(c_{\kappa})n_{l, \kappa}- K_D(c_{\kappa})n_{l, \kappa}\right)|^2,
\end{align*}
the assumptions A2-A3 lead to 
\begin{equation}\label{EQ1}
\int_{Q_T}|\partial_{t} n_{l,\kappa}|^{2} \leq C', \qquad \qquad \sup_{t\in [0,T]}\int_{Q} |\nabla n_{l,\kappa}|^2 \leq C'.
\end{equation}
To drive estimates for $n_{d, \kappa}$, we follow the same procedures as above 
\begin{equation}\label{EQ2}
\int_{Q_T}|\partial_{t} n_{d,\kappa}|^{2} \leq C', \qquad \qquad \sup_{t\in [0,T]}\int_{Q} |\nabla n_{d,\kappa}|^2 \leq C'.
\end{equation}
\end{proof}
Our main a priori estimate is the following:
\begin{lemma}
Thanks to the estimates provided in Lemma \ref{Lemma1} , we have 
\begin{equation}\label{EqN}
\int_{Q_{T}} \left( \partial_{t} H_{\kappa}(n_{\kappa})\right)^2+\sup _{t \in[0, T]} \int_{Q} |\nabla H_{\kappa}(n_{\kappa})|^{2} \leq C  
\end{equation}
\end{lemma}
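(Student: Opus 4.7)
The plan is to run a classical energy estimate: test the total-density equation $\partial_{t} n_{\kappa} - \Delta H_{\kappa}(n_{\kappa}) = R_{\kappa}$ from \eqref{eq-density} against $\partial_{t} H_{\kappa}(n_{\kappa})$ and integrate over $Q$. Since $H_{\kappa}'(s)=s\,p_{\kappa}'(s)\geq 0$, the chain rule yields $\partial_{t}H_{\kappa}(n_{\kappa}) = H_{\kappa}'(n_{\kappa})\,\partial_{t}n_{\kappa}$, while the Neumann conditions \eqref{Q5} on $n_{l,\kappa}\nabla p_\kappa$ and $n_{d,\kappa}\nabla p_\kappa$ give $n_\kappa \nabla p_\kappa\cdot\mathbf{n}=0$, and thus $\nabla H_{\kappa}(n_{\kappa})\cdot\mathbf{n} = n_\kappa \nabla p_\kappa\cdot\mathbf{n} = 0$ on $\partial Q$. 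Integration by parts therefore produces the differential identity $\int_{Q} H_{\kappa}'(n_{\kappa})(\partial_{t} n_{\kappa})^{2}\,dx + \tfrac{1}{2}\tfrac{d}{dt}\int_{Q}|\nabla H_{\kappa}(n_\kappa)|^{2}\,dx = \int_{Q} R_{\kappa}\,\partial_{t} H_{\kappa}(n_{\kappa})\,dx$.

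Next I would bound the right-hand side by Young's inequality in the form $|R_\kappa\,\partial_t H_\kappa| \leq \tfrac{1}{2} R_\kappa^2\, H_\kappa'(n_\kappa) + \tfrac{1}{2}\,H_\kappa'(n_\kappa)(\partial_t n_\kappa)^2$, absorbing the second term into the left-hand side, then integrate in time over $[0,T]$. Using the $L^\infty$ bounds on $n_{l,\kappa}$, $n_{d,\kappa}$ and $c_\kappa$ (hence on $R_\kappa = G(c_\kappa)n_{l,\kappa}-\mu n_{d,\kappa}$), hypothesis (H2) and the finiteness condition (H6) to control $\|\nabla H_\kappa(n_\kappa^0)\|_{L^2}$ and $\int_Q\int_0^{n^0_\kappa} H_\kappa(\xi)\,d\xi$, together with the pointwise separation $n_\kappa \leq 1-\rho$ inherited from Lemma \ref{nbound}, I obtain $\sup_{t\in[0,T]}\int_Q|\nabla H_\kappa(n_\kappa)|^2\,dx + \int_{Q_T} H_\kappa'(n_\kappa)(\partial_t n_\kappa)^2\,dx\,dt \leq C$.

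The last step converts the weighted quantity into the target $\int (\partial_t H_\kappa)^2$. I factor $(\partial_t H_\kappa)^2 = H_\kappa'(n_\kappa)\cdot\bigl(H_\kappa'(n_\kappa)(\partial_t n_\kappa)^2\bigr)$ and invoke the pointwise bound $H_\kappa'(n_\kappa)\leq \kappa(1-\rho)/\rho^2$, again supplied by Lemma \ref{nbound}; combining with the weighted integral just established closes the argument. A parallel route, that I would keep in reserve, writes $\partial_t H_\kappa(n_\kappa) = H_\kappa'(n_\kappa)\bigl(\partial_t n_{l,\kappa}+\partial_t n_{d,\kappa}\bigr)$ and chains the pointwise bound on $H_\kappa'$ with the $L^2(Q_T)$ estimates \eqref{EqL}--\eqref{EqD} from Lemma \ref{Lemma1}.

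The step I expect to be most delicate is extracting a constant $C$ that is genuinely uniform in $\kappa$, which is what is used downstream to pass to the incompressible limit. The coefficient $H_\kappa'(n_\kappa) = \kappa n_\kappa/(1-n_\kappa)^2 = p_\kappa^2/(\kappa n_\kappa)$ is a priori singular as $\kappa\to 0$ on any congested region where $n_\kappa\to 1$, and the $\rho$ produced by Lemma \ref{nbound} is stated for fixed $\kappa$ and may degenerate when $\kappa$ becomes small. Closing the estimate uniformly therefore requires exchanging this singular factor against the $L^\infty$ bound on $p_\kappa$ and the $L^2$ time-derivative control of $n_{l,\kappa}$, $n_{d,\kappa}$ from Lemma \ref{Lemma1}, rather than relying on a crude $\|H_\kappa'\|_{L^\infty}$ bound; this is the only place where the choice between the two routes in the previous paragraph actually matters.
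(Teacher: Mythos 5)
Your proposal is correct and shares its skeleton with the paper's proof: both test the total-density equation against $\partial_t H_\kappa(n_\kappa)$, integrate by parts to produce $\int_Q H_\kappa'(n_\kappa)(\partial_t n_\kappa)^2 + \tfrac12\tfrac{d}{dt}\int_Q|\nabla H_\kappa(n_\kappa)|^2$, and at the end convert the weighted quantity into $\int(\partial_t H_\kappa)^2$ by paying one factor of $\|H_\kappa'(n_\kappa)\|_{L^\infty}$, exactly as in \eqref{SD}. Where you genuinely diverge is the treatment of the source term $\int_{Q_t} R_\kappa\,\partial_t H_\kappa(n_\kappa)$: you split it by the weighted Young inequality $R_\kappa H_\kappa'\partial_t n_\kappa\le\tfrac12 H_\kappa'R_\kappa^2+\tfrac12 H_\kappa'(\partial_t n_\kappa)^2$, absorb the second piece, and bound the first by the $L^\infty$ control of $R_\kappa$ and $H_\kappa'$; the paper instead introduces the primitive $\Psi(n_\kappa,\dots)=\int_0^{n_\kappa}\chi_\kappa(\zeta)\psi(\zeta,\dots)\,d\zeta$, integrates by parts in time, and is then forced to invoke the $L^2$ bounds \eqref{EqC}--\eqref{EqD} on $\partial_t c_\kappa$, $\partial_t n_{l,\kappa}$, $\partial_t n_{d,\kappa}$ from Lemma \ref{Lemma1} to control the commutator terms $\int_0^{n_\kappa}\chi_\kappa(\zeta)\,\partial_\xi(\cdot)\,\partial_{(\cdot)}\psi$. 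Your route is shorter and, notably, does not use Lemma \ref{Lemma1} at all, which the paper's statement of the lemma presents as the essential input; both routes, however, stand or fall on the same point, namely the uniform-in-$\kappa$ bound $\chi_\kappa(n_\kappa)\le C$ of \eqref{SD}, which requires $n_\kappa\le n_M<1$ with $n_M$ independent of $\kappa$. You are right to single this out as the delicate step: Lemma \ref{nbound} is proved at fixed $\kappa$ (its barrier $H_\epsilon^{-1}(\varphi)$ approaches $1$ as $\kappa\to0$ because $H_\epsilon$ scales like $\kappa$), so the uniformity asserted in \eqref{Q8} is not actually supplied by that lemma; the paper asserts it without further argument, and your proposed remedy of trading $H_\kappa'=p_\kappa/(1-n_\kappa)$ against the $L^\infty$ bound on $p_\kappa$ is the natural way to try to repair it, though you would still need a lower bound on $1-n_\kappa$ of order $\kappa$ with a controlled constant to conclude. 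Modulo that shared gap, your argument is complete and slightly cleaner than the paper's.
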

\begin{proof}
To derive the estimates for  $H_{\kappa}(n_{\kappa})$, let 
\begin{equation}
\Psi(n_{\kappa}, n_{\kappa, l}, n_{\kappa, d}, c_{\kappa}):=\int_{0}^{n_{\kappa}} \chi_{\kappa}(\zeta) \psi(\zeta, n_{\kappa, l}, n_{\kappa, d}, c_{\kappa}) d \zeta ,
\end{equation}
where $\chi_{\kappa}(\zeta)=\kappa\zeta(1-\zeta)^{-2}$, and $\psi(\zeta, n_{\kappa, l}, n_{\kappa, d}, c_{\kappa})=G(c_{\kappa})n_{l, \kappa}-\mu n_{d, \kappa} -\zeta +\zeta$ .\\
We have
 \begin{equation}\label{Ide}
\int_{\tau}^{t} \int_{Q} \partial_{\xi}  \Psi(n_{\kappa}, n_{l,\kappa}, n_{d,\kappa}, c_{\kappa})=\int_{Q} \Psi(n_{\kappa}, n_{l,\kappa}, n_{d,\kappa}, c_{\kappa})\bigg|_{\xi=t}- \int_{Q} \Psi(n_{\kappa}, n_{l,\kappa}, n_{d,\kappa}, c_{\kappa}) \bigg|_{\xi=\tau}.
\end{equation}
Multiplying the equation of the total density \eqref{Q1} by $\partial_{\xi}(H(n_{\kappa}))$ and integrating over space and time, we obtain
$$
\begin{aligned}
& \int_{\tau}^{t} \int_{Q} \chi_{\kappa} (n_{\kappa})(\partial_\xi n_{\kappa})^2
= \int_{\tau}^{t} \int_{Q} \partial_{\xi}(H(n_{\kappa})) \partial_{\xi} n_{\kappa}\\
&= \int_{\tau}^{t} \int_{Q} div(\chi_{\kappa}(n_{\kappa}) \nabla n_{\kappa}) \partial_{\xi}(H(n_{\kappa}))+\int_{\tau}^{t} \int_{Q} \partial_{\xi}(H(n_{\kappa})) \psi( n_{\kappa}, n_{\kappa, l}, n_{\kappa,d}, c_{\kappa})\\
&= -  \int_{\tau}^{t} \int_{Q} \chi_{\kappa}(n_{\kappa}) \nabla n_{\kappa} \partial_{\xi}(\nabla H(n_{\kappa}))+\int_{\tau}^{t} \int_{Q} \partial_{\xi}(H(n_{\kappa})) \psi( n_{\kappa}, n_{\kappa, l}, n_{\kappa,d}, c_{\kappa}) \\
&= -\frac{1}{2}\int_{\tau}^{t} \int_{Q} \partial_\xi |\chi_{\kappa}(n_{\kappa}) \nabla n_{\kappa}|^2 +\int_{\tau}^{t} \int_{Q} \partial_{\xi}(H(n_{\kappa})) \psi( n_{\kappa}, n_{\kappa, l}, n_{\kappa,d},c_{\kappa}). 
\end{aligned}
$$
Where we used integration by parts in the second step. Thus, using the
identity \eqref{Ide} we obtain
$$
\begin{aligned}
\int_{\tau}^{t} \int_{Q} \partial_{\xi} \Psi(n_{\kappa}, n_{l,\kappa}, n_{d,\kappa}, c_{\kappa})   &=\int_{\tau}^{t} \int_{Q} \int_{0}^{n_{\kappa}} \chi_{\kappa}(\zeta) \partial_{\xi}  \psi(\zeta, n_{l,\kappa}, n_{d,\kappa}, c_{\kappa})+\int_{\tau}^{t} \int_{Q} \partial_{\xi} n_{\kappa} \chi_{\kappa}(n_{\kappa}) \psi(n_{\kappa}, n_{l,\kappa}, n_{d,\kappa}, c_{\kappa}) \\
&=\int_{\tau}^{t} \int_{Q} \int_{0}^{n_{\kappa}} \chi_{\kappa}(\zeta)(\partial_{\xi} n_l \partial_{n_l} \psi(\zeta, n_{l,\kappa}, n_{d,\kappa}, c_{\kappa})+\partial_{\xi} n_{d} \partial_{n_d} \psi(\zeta, n_{l,\kappa}, n_{d,\kappa}, c_{\kappa}) \\ &+\partial_{\xi} c \partial_{c} \psi(\zeta, n_{l,\kappa}, n_{d,\kappa}, c_{\kappa})+\int_{\tau}^{t} \int_{Q} \partial_{\xi}(H(n_{\kappa})) \psi(n_{\kappa}, n_{l,\kappa}, n_{d,\kappa}, c_{\kappa}),
\end{aligned}
$$

it follows that 
$$
\begin{aligned}
& \int_{\tau}^{t} \int_{Q} \chi_{\kappa}(n_{\kappa})(\partial_{\xi} n_{\kappa})^{2}+\frac{1}{2} \int_{Q}|\chi_{\kappa}(n_{\kappa}) \nabla n_{\kappa}|^{2}|_{\xi=t} \\
=&\frac{1}{2} \int_{Q}|\chi_{\kappa}(n_{\kappa}) \nabla n_{\kappa}|^{2}|_{\xi=\tau}+\int_{Q} \Psi(n_{\kappa}, n_{l,\kappa}, n_{d,\kappa}, c_{\kappa})|_{\xi=t}-\int_{Q} \Psi(n_{\kappa}, n_{l,\kappa}, n_{d,\kappa}, c_{\kappa})|_{\xi=\tau} \\ &+ \int_{\tau}^{t} \int_{Q} \int_{0}^{n_{\kappa}} \chi_{\kappa}(\zeta)(\partial_{\xi} n_l \partial_{n_l} \psi(\zeta, n_{l,\kappa}, n_{d,\kappa}, c_{\kappa})+\partial_{\xi} n_{d} \partial_{n_d} \psi(\zeta, n_{l,\kappa}, n_{d,\kappa}, c_{\kappa}) \\ &+\partial_{\xi} c \partial_{c} \psi(\zeta, n_{l,\kappa}, n_{d,\kappa}, c_{\kappa})
\end{aligned}
$$
Lemma \ref{nbound} and \eqref{Q8} imply that there exists $\kappa_{0}>0$ such that  $n_{\kappa}$ satisfy $n_{\kappa}\leq n_{M} <1$ in $Q_{T}$ for all $0<\kappa<\kappa_{0}$. Consequently, $\chi_{\kappa}(n_{\kappa})$ is positive and uniformly bounded from above by a constant which is independent from $\kappa$ i.e 
\begin{equation}\label{SD}
  0 < \chi_{\kappa}\left(n_{\kappa}(t, x)\right)=\kappa n_{\kappa}(t, x)(1-n_{\kappa}(t, x))^{-2} \leq \kappa n_M(1-n_M)^{-2} =\frac{p(n_{M})}{(1- n_{M})} \leq C, \qquad (t, x) \in Q_{T}.
\end{equation}
Applying Young's inequality on the last integral we can estimate
\begin{align}
&\int_{\tau}^{t} \int_{Q} \int_{0}^{n_{\kappa}} \chi_{\kappa}(\zeta)(\partial_{\xi} n_l \partial_{n_l} \psi(\zeta, n_{l,\kappa}, n_{d,\kappa}, c_{\kappa})+\partial_{\xi} n_{d} \partial_{n_d} \psi(\zeta, n_{l,\kappa}, n_{d,\kappa}, c_{\kappa}) +\partial_{\xi} c \partial_{c} \psi(\zeta, n_{l,\kappa}, n_{d,\kappa}, c_{\kappa})\\
&\leq C \int_{\tau}^{t} \int_{Q}\left((|\partial_{\xi} n_{l, \kappa}|^{2}+|\partial_{\xi} n_{\kappa, d}|^{2} + |\partial_{\xi} c|^{2})+ \int_{0} ^{n_{\kappa}} \sum_{i=2}^{4}|\partial_{i} \psi( \zeta, n_{\kappa, l}, n_{\kappa, d}, c_{\kappa})|^{2}\right),
\end{align}
estimates \eqref{EQ3},\eqref{EQ1},\eqref{EQ2}, and \eqref{SD} and setting $\tau =0$, lead to 
\begin{equation*}
\int_{Q_{T}} \chi_{\kappa}(n_{\kappa})(\partial_{t} n_{\kappa})^{2} \leq C', \quad \sup _{t \in[0, T]} \int_{Q}|\chi_{\kappa}(n_{\kappa}) \nabla n_{\kappa}|^{2} \leq C'.
\end{equation*}
since $n_{\kappa}$ is uniformly bounded, we have
\begin{align*}
\int_{Q_{T}} \left( \partial_{t} H_{\kappa}(n_{\kappa})\right)^2 = \int_{Q_T}(\chi_{\kappa}(n_{\kappa})\partial_{t} n_{\kappa})^{2}  \leq \|\chi_{\kappa}(n_{\kappa})\|_{L^\infty(Q_T)}\int_{ Q_{T}}\chi_{\kappa}(n_{\kappa})(\partial_{t} n_{\kappa})^{2} \leq C'
\end{align*}
which implies
\begin{equation}
 \int_{Q_{T}} \left( \partial_{t} H_{\kappa}(n_{\kappa})\right)^2  \leq C', \qquad \sup _{t \in[0, T]} \int_{Q} |\nabla H_{\kappa}(n_{\kappa})|^{2} \leq C' .
\end{equation}
This shows that the family $\Gamma_{\kappa} = H_{\kappa}(n_\kappa), \  \text{for} \  0<\kappa<\kappa_{1}$, is uniformly bounded in $W=\{u \in L^{\infty}(0, T ; H^{1}(Q)) \mid \partial_{t} u \in L^{2}(0, T ; L^{2}(Q))\}$, which is compactly embedded into $C\left(0, T ; L^{2}(Q)\right)$ by Aubin-Lions' Lemma. Consequently, there exists $\Gamma_0 \in C\left(0, T ; L^{2}(Q)\right)$ such that 
\[
\Gamma_{\kappa} \to \Gamma_0 \qquad in \quad C(0, T; L^{2}(Q)) \quad as \ \ \kappa \to 0 
\]
We have from the definition of $H(n_{\kappa})= p_{\kappa}- \kappa\ln(1-n_\kappa)$, which implies that $p_{0}=\Gamma_0$, therefore \eqref{EQ10} holds. \\
\end{proof} 
The next proposition shows the energy estimate, we introduce 
the quantity 
 \[
 \Phi_{\kappa}(n)= \int_{0}^n H_{\kappa}(s) ds.
 \]
 
\begin{prop} (Energy estimate )
Let the above assumptions hold. Then, the following estimate is  valid 
\begin{equation}
    \|\Phi_{\kappa}(n_{\kappa})\|_{L^{\infty}(0,T; L^1(Q))}+ \|\nabla H_{\kappa}(n_{\kappa})\|^2_{L^2(Q_T)} \leq C
\end{equation}
where $C$ is a constant independent of $\kappa.$
\end{prop}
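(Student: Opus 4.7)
The plan is to test the total density equation \eqref{Q1} with $H_\kappa(n_\kappa)$ and exploit the primitive structure $\Phi_\kappa'=H_\kappa$. First I would multiply \eqref{Q1} by $H_\kappa(n_\kappa)$ and integrate over $Q$. By the chain rule, $\partial_t n_\kappa\,H_\kappa(n_\kappa)=\partial_t\Phi_\kappa(n_\kappa)$, while integration by parts on the diffusion term produces $-\int_Q \Delta H_\kappa(n_\kappa)\,H_\kappa(n_\kappa)\,dx = \int_Q |\nabla H_\kappa(n_\kappa)|^2\,dx$. The boundary term vanishes because $\nabla H_\kappa(n_\kappa)=n_\kappa\nabla p_\kappa$ (since $H_\kappa'(s)=s\,p_\kappa'(s)$ by \eqref{H-equation}), and adding the two flux conditions in \eqref{Q5} gives $n_\kappa\nabla p_\kappa\cdot\mathbf{n}=0$ on $\partial Q$.

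Integrating in time over $(0,t)$ yields the identity
\begin{equation*}
\int_Q \Phi_\kappa(n_\kappa(t))\,dx + \int_0^t\!\!\int_Q |\nabla H_\kappa(n_\kappa)|^2\,dx\,ds = \int_Q \Phi_\kappa(n_\kappa^0)\,dx + \int_0^t\!\!\int_Q R_\kappa\,H_\kappa(n_\kappa)\,dx\,ds.
\end{equation*}
The initial term is $\kappa$-uniformly bounded thanks to assumption (H6), which is precisely $\int_Q \Phi_\kappa(n_\kappa^0)<\infty$. For the source term, $R_\kappa=G(c_\kappa)n_{l,\kappa}-\mu n_{d,\kappa}$ is uniformly bounded in $L^\infty(Q_T)$ by \eqref{unif} and \eqref{Q8}, while the preceding lemma, together with the strong convergence $H_\kappa(n_\kappa)\to p_0$ in $C(0,T;L^2(Q))$, controls $\|H_\kappa(n_\kappa)\|_{L^2(Q_T)}$ uniformly in $\kappa$. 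Cauchy--Schwarz then gives $\int_0^t\!\!\int_Q R_\kappa H_\kappa(n_\kappa)\leq C(T)$, so the $L^\infty(0,T;L^1(Q))$ bound on $\Phi_\kappa(n_\kappa)$ follows. The gradient part of the estimate is then immediate, since the preceding lemma already establishes $\sup_{t\in[0,T]}\int_Q|\nabla H_\kappa(n_\kappa)|^2\leq C$, whence $\|\nabla H_\kappa(n_\kappa)\|_{L^2(Q_T)}^2\leq CT$ with a constant independent of $\kappa$.

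The main obstacle is rigorously justifying the chain-rule identity $\partial_t n_\kappa\,H_\kappa(n_\kappa)=\partial_t\Phi_\kappa(n_\kappa)$ and the boundary integration by parts at the level of weak solutions. The cleanest route is to carry out the whole computation on the $\epsilon$-regularised system of Section 1.1, where $n_\epsilon$ and $H_\epsilon(n_\epsilon)$ are smooth enough for classical manipulations, derive the bound with constants independent of both $\epsilon$ and $\kappa$, and then pass to the limit $\epsilon\to 0$ using the compactness already obtained in that section. If one prefers to stay at the $\kappa$-level, the alternative is to replace $H_\kappa(n_\kappa)$ by a mollification $H_\kappa(n_\kappa)*\rho_\delta$, use it as a test function in the weak formulation of \eqref{Q1}, and let $\delta\to 0$ exploiting the $H^1$-regularity provided by the preceding lemma.
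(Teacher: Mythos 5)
Your proposal follows essentially the same route as the paper: test \eqref{Q1} with $H_\kappa(n_\kappa)$, use $\Phi_\kappa'=H_\kappa$ to produce $\partial_t\Phi_\kappa(n_\kappa)$, integrate by parts to obtain $\int_{Q}|\nabla H_\kappa(n_\kappa)|^2$, bound the source term using the uniform bounds on $R_\kappa$ and on $H_\kappa(n_\kappa)$, and control the initial term via (H6). The only cosmetic difference is that the paper estimates $\int R_\kappa H_\kappa(n_\kappa)$ by expanding $H_\kappa(n_\kappa)=p_\kappa+\kappa\ln(1-n_\kappa)$ and invoking the $L^1$ bound on $n_\kappa$, whereas you use Cauchy--Schwarz with the $L^2(Q_T)$ control of $H_\kappa(n_\kappa)$; both are valid, and your additional remarks on justifying the chain rule and the boundary terms at the $\epsilon$-regularised level are a point the paper passes over silently.
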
 
\begin{proof}
We multiply the equation \eqref{Q1} by $H_{\kappa}(n_{\kappa})$ and integrate over space, we have 
\begin{align}
    \frac{d}{dt}\int_{Q} \Phi_{\kappa}(n_{\kappa}) dx+ \int_{Q} |\nabla H_{\kappa}(n_{\kappa})|^2 dx = \int_{Q} (G(c_{\kappa}) n_{l, \kappa} - \mu n_{d, \kappa})) H_{\kappa}(n_{\kappa}) dx
\end{align}
inserting the expression of $H_{\kappa}(n_{\kappa})$, we get
\begin{align}
    \frac{d}{dt}\int_{Q} \Phi_{\kappa}(n_{\kappa}) dx+ \int_{Q} |\nabla H_{\kappa}(n_{\kappa})|^2 dx &\leq (G_m +\mu)\int_{Q} n_{\kappa} (p_{\kappa} + \kappa\ln (1-n_\kappa)) \\
    &\leq (G_m +\mu)\left(\int_{Q} p_\kappa -\kappa n_{\kappa} + \kappa  \int_{Q}n_{\kappa} \ln (1- n_{\kappa})\right)
\end{align}
after time integrating, we get 
\begin{align}
    \int_{Q} \Phi_{\kappa}(n_{\kappa}) dx+ \int_{Q_T} |\nabla H_{\kappa}(n_{\kappa})|^2 dxdt &\leq (G_m +\mu)\int_{Q_T} n_{\kappa} (p_{\kappa} + \kappa\ln (1-n_\kappa)) \\
    &\leq (G_m +\mu)\left(\int_{Q_T} p_\kappa -\kappa n_{\kappa} dx dt + \kappa  \int_{Q_T}n_{\kappa} \ln (1- n_{\kappa} dx dt)\right) + \int_Q \Phi(n^0)dx .
\end{align}

Taking into account (H6), $n_\kappa$ is bounded in $L^1$, and $x \to x \ln (1-x)$ is uniformly bounded on $[0, n_M]$, thus the right hand side integrals are uniformly bounded. \\
\textbf{$L^2$ Estimate for pressure gradient:}
We may write  
\[
\int_{Q_T} |\nabla H_{\kappa}(n_{\kappa})|^2 dxdt = \int_{Q_T} n_{\kappa}^2 |\nabla p_{\kappa}|^2dx dt \leq C,
\]
given that $n_{\kappa}$ is uniformly bounded in $L^{\infty}$,  we get
\begin{equation}\label{Pr}
\int_{Q_T} |\nabla p_{\kappa}|^2dx dt \leq C,
\end{equation}
taking Friedrich's inequality into account we deduce that $p_{\kappa} \in L^{2}(Q_T).$ Consequently, the sequence $(\nabla p_{\kappa})_{\kappa}$ converges in the strong topology of $L^2( 0,T; L^2(Q))$ to $\nabla p_{0}$ and thus \eqref{EQ12} holds.
\end{proof}
\section{Complementarity relation}
 The usual strategy to prove the complementarity relation is to prove the strong convergence of the gradient pressure $\nabla p_{\kappa}$, and pass to the limit in the equation for the pressure \eqref{eq-pressur}.  Since we have no control on $\partial_t p_{\kappa}$, we will be able to detect the limit only after the proof of the strong
compactness of $\nabla H_{\kappa}$, thanks to the uniform estimates established in the previous section. More precisely we prove the following
complementarity relation

\begin{equation}
-p_{0}^{2}\Delta p_{0}= p_{0}^2 (G(c_0)n_{l,0} -\mu n_{d,0}).
\end{equation}
We know that 
\begin{align}
\Delta H_{\kappa}(n_{\kappa})&= \partial_t n_{\kappa} -(G(c_{\kappa}) n_{l, \kappa}-\mu n_{d, \kappa}),  \label{Eqqq}
\end{align}
and
\[
 \Delta p_{\kappa} = \Delta H_{\kappa} -\kappa \Delta \ln (1-n_{\kappa}).
\]
 From \eqref{Cond1}, \eqref{EqL}-\eqref{EqD}, and  the inequality $\ln(1-n_{\kappa}) \leq \ln(1-n_{M}) \leq 0$ we deduce that $\Delta H_{\kappa} $ and $\Delta p_{\kappa}$ are bounded in $L^{\infty}(0,T; L^1(Q))$,  therefore we have local compactness in space for $\nabla H_{\kappa}$. To ensure time compactness we apply the Aubin-Lions lemma. We may write  
\begin{align*}
\partial_t(\nabla H_{\kappa})&= \nabla(\partial_t H_{\kappa}) \\
&= \nabla [\partial_{t} (p_{\kappa}+ \kappa \ln(1-n_{\kappa}) )]\\
&=\nabla[\frac{p_{\kappa}}{p_{\kappa} +\kappa} \partial_t p_{\kappa}]\\
&=\nabla[\frac{p_{\kappa}}{\kappa}((p_{\kappa}+\kappa) R_{\kappa}+ p_{\kappa} \Delta p_{\kappa} )+\frac{p_{\kappa}}{p_{\kappa}+\kappa} |\nabla p_{\kappa}|^2 ],
\end{align*} 
 from \eqref{EqN}, \eqref{Pr} and \eqref{Eqqq} $\partial_{t} H_{\kappa}$, $|\nabla p_{\kappa}|^2$, and $\Delta p_{\kappa}$ are bounded functions in $L^1$, on the other hand, given that $n_{\kappa}$, and $ p_{\kappa}$  are uniformly bounded in $L^{\infty}$, we deduce that the r.h.s term is a sum of space derivatives of functions bounded in $L^1$. Consequently, we can extract a sub-sequence such that
\[
\nabla H_{\kappa} \to \nabla p_{0} \qquad \text{strongly in} \  L^1(Q_T).
\]
After extraction of a sub-sequence we obtain convergence almost everywhere for $\nabla H_{\kappa}$.\\
Let $\varphi \in \mathcal{D}(Q_T)$ be a test function. We consider the equation \eqref{Eqqq}, and multiply it by $\kappa P'_{\kappa}$ we obtain
\[
(p_{\kappa}+ \kappa)^2\Delta H_{\kappa}= \frac{\kappa^2}{(1-n_{\kappa})^2} \partial_t n_{\kappa} - (p_{\kappa}+ \kappa)^2 (G(c_{\kappa}) n_{l, \kappa}-\mu n_{d, \kappa}),
\]
we multiply it by $\varphi$  and we integrate over $Q_T$
\begin{align*}
    \kappa^2 \int_{Q_T} (\Delta H_{\kappa} -\frac{1}{(1-n_{\kappa})^2} \partial_t n_{\kappa} + R_{\kappa}) \varphi &+ 2\kappa \int_{Q_T} (\Delta H_{\kappa} -R_{\kappa})p_{\kappa} \varphi \\
    &=  \int_{Q_T} 2 p_{\kappa} \varphi \nabla p_{\kappa} \nabla H_{\kappa}  + p^2_{\kappa} \nabla H_{\kappa} \nabla \varphi -  p^2_{\kappa} R_{\kappa} \varphi.
\end{align*}
Thanks to the bounds provided the previous section. From \eqref{EqC}, \eqref{EqL}, \eqref{EqD}, and \eqref{EqN},  $c_{\kappa}, n_{l, \kappa}, n_{d, \kappa}$, and $H_{\kappa}$ are compactly embedded in $C(0,T; L^2(Q))$ which is continuously embedded in $C(0,T; L^1(Q))$,  $p_{\kappa}$ is uniformly bounded in $L^2(Q_T)$ and thus, after the extraction of sub-sequences, we can pass to the limit for $\kappa \to 0$ in the product and obtain the complementarity relation 
\[
\int_{Q_T} \left(2 p_{0} |\nabla p_{0}|^2 \varphi   + p^2_{0} \nabla p_{0} \nabla \varphi -  p^2_{0} R_{\kappa} \varphi \right) dx dt,
\]
which is equivalent to
 \[
-\Delta p_0 = G(c_0)n_{l,0}-\mu n_{d,0}, \qquad  \mathcal{D'}(Q_T).
 \]


\section{Convergence rate: proof of Theorem \ref{Theo-conv-rate}}
 Recently, the authors of the paper \cite{David}  proved  the $L^{4/3}$-convergence rate using interpolation with BV bound. Here we prove the result in $L^1$ directly without using interpolation  with BV bound.
We define the function $\varphi$ to be the solution of the
following parabolic equation in $(0,T) \times Q$
\begin{equation}\label{EQ4}
\begin{cases}
\partial_t \varphi =(\lambda(t)+\varepsilon)\Delta \varphi \\
\varphi(t,0)=0 \\
\varphi(0,x)=\varphi_0,
\end{cases}
\end{equation}
where $\varepsilon$ is a small regularization parameter, and the function $\lambda$ is defined by
\begin{equation}\label{lamda}
 \lambda(t,x):= \int_{0}^1 H'(\xi n_{\kappa}+(1-\xi)n_{\kappa'})d\xi.
\end{equation}
for $0< \kappa' < \kappa$, and $H$ is defined by \eqref{H-equation}.  Our approach is based on the following Lemma which appeared in the investigation of Lipschitz semigroupe continuous for bacterial biofilm model \cite{MA}
\begin{lemma}\label{lemma2}
The problem \eqref{EQ4} has a unique solution $\varphi$ which satisfies 

$$\varphi \in L^{\infty}((0,T)\times Q) \cap L^{\infty}(0,T; W^{1,2}(Q)), 
 \quad \Delta \varphi \in L^{2}((0,T)\times Q).$$
Moreover, the solution $\varphi$ is subject to the inequalities

\begin{gather}
 \|\varphi(t)\|_{L^{\infty}(Q)} \leq \|\varphi_{0}\|_{L^{\infty}(Q)}, \label{EQ5'} \\
 \|\varphi(t)\|_{W^{1,2}(Q)}^{2}+ C \varepsilon \int_{0}^{t}\left\|\Delta \varphi(\xi)\right\|_{L^{2}(Q)}^{2} d \xi \leq\left\|\varphi_{0}\right\|_{W^{1,2}(Q)}^{2}, \label{EQ5}
\end{gather}
\end{lemma}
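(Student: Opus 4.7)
Equation \eqref{EQ4} is a linear parabolic problem, and my plan is to exploit the fact that the $\varepsilon$-regularization, combined with the $L^{\infty}$ control $n_{\kappa},n_{\kappa'}\leq n_{M}<1$ coming from Lemma~\ref{nbound} and \eqref{Q8}, makes the coefficient $a(t,x):=\lambda(t,x)+\varepsilon$ uniformly parabolic. Indeed, since $p_{\kappa}'(s)=\kappa/(1-s)^{2}$ and $H_{\kappa}'(s)=s\,p_{\kappa}'(s)$, the argument of $H'$ in \eqref{lamda} lies in $[0,n_{M}]$, so $0\leq\lambda(t,x)\leq \kappa n_{M}/(1-n_{M})^{2}$ and therefore $\varepsilon\leq a(t,x)\leq \varepsilon+C$ a.e.\ in $(0,T)\times Q$.

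For existence and uniqueness I would implement a Faedo--Galerkin scheme along the $L^{2}$-orthonormal basis of Dirichlet eigenfunctions of $-\Delta$ on $Q$. The projected problem is a finite linear system of ODEs with bounded measurable coefficients, hence globally solvable; the a~priori estimates obtained below allow extraction of a limit $\varphi$ in the stated regularity class. Uniqueness is immediate from linearity: the difference of two solutions satisfies the same equation with zero data, and an energy estimate relying on $a\geq\varepsilon>0$ forces it to vanish.

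The $L^{\infty}$ bound \eqref{EQ5'} is a direct application of the parabolic maximum principle: since \eqref{EQ4} carries neither a source term nor a zeroth-order coefficient, the constants $\pm\|\varphi_{0}\|_{L^{\infty}(Q)}$ serve as a supersolution and a subsolution that dominate both the initial datum and the homogeneous boundary datum, so $\varphi$ is squeezed between them.

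For \eqref{EQ5}, I would test the Galerkin approximation with $-\Delta\varphi$. Integration by parts using the homogeneous Dirichlet condition converts $-\int_{Q}\partial_{t}\varphi\,\Delta\varphi\,dx$ into $\tfrac{1}{2}\tfrac{d}{dt}\|\nabla\varphi\|_{L^{2}(Q)}^{2}$, while the right-hand side becomes $-\int_{Q}a(t,x)(\Delta\varphi)^{2}\,dx\leq -\varepsilon\|\Delta\varphi\|_{L^{2}(Q)}^{2}$. Integrating in time yields
\[
\|\nabla\varphi(t)\|_{L^{2}(Q)}^{2}+2\varepsilon\int_{0}^{t}\|\Delta\varphi(\xi)\|_{L^{2}(Q)}^{2}\,d\xi\leq \|\nabla\varphi_{0}\|_{L^{2}(Q)}^{2},
\]
and since $\varphi(t,\cdot)\in H^{1}_{0}(Q)$ for each $t$, Poincar\'e's inequality absorbs the $L^{2}$ part of the $W^{1,2}$-norm into a multiple of the gradient norm, yielding \eqref{EQ5} with a suitable constant $C$. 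I do not foresee a serious obstacle here: the whole argument is classical for uniformly parabolic linear equations with no zeroth-order term, and the only ingredient drawn from earlier in the paper is the essential bound $n_{\kappa}\leq n_{M}<1$ that makes $\lambda$ essentially bounded and guarantees that the upper bound on $a$ is independent of $\kappa$.
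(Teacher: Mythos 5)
Your proposal is correct and follows essentially the same route as the paper: the paper likewise observes that $\lambda\in L^{\infty}$, $\lambda\ge 0$ makes \eqref{EQ4} non-degenerate for $\varepsilon>0$, defers existence/uniqueness to the classical linear parabolic theory of Ladyzhenskaya et al., obtains \eqref{EQ5'} by a maximum-principle argument (there via testing with the truncation $|\varphi-\|\varphi_{0}\|_{\infty}|^{+}$ rather than your comparison with constant barriers, which is equivalent and in fact handles the lower bound more transparently), and derives \eqref{EQ5} by exactly your computation of testing with $\Delta\varphi$ and discarding the nonnegative $\lambda$-term. Your added remarks on the explicit upper bound for $\lambda$ and on Poincar\'e's inequality to recover the full $W^{1,2}$-norm are harmless refinements of the same argument.
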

\begin{proof}
We have from the previous section $(n_{\kappa}, c_{\kappa}, n_{\kappa'}, c_{\kappa'}) \in \left(C(0,T; L^2(Q))\cap L^{\infty}((0,T)\times Q)\right)^4$, then the function $\lambda$ satisfies

\begin{equation}\label{EQQ6}
\lambda \in L^{\infty}( (0,T)\times Q ),  \quad \text { and } \ \ \lambda(t, x) \geq 0.
\end{equation}
Therefore the problem \eqref{EQ4} with $\varepsilon >0$ is a non-degenerate parabolic equation. To drive the estimates for $\varphi$ we use the maximum principle. We multiply equation \eqref{EQ4} by $|\varphi - \|\varphi_0\|_{\infty}|^{+}$, and integrate in space  we get 
\begin{equation}
\frac{1}{2} \frac{d}{dt} \int_{Q}(|\varphi -\|\varphi_0\|_{\infty}|^{+})^2 \leq 0,
\end{equation}
after time integration we get  
\begin{equation}
\frac{1}{2} \int_{Q}(|\varphi(t) -\|\varphi_0\|_{\infty}|^{+})^2 \leq 0.
\end{equation}
So $|\varphi - \|\varphi_0\|_{L^\infty}|^{+} =0$, thus we get  $\|\varphi \|_{L^\infty(Q)} \leq \|\varphi_0\|_{L^\infty(Q)}.$
To drive the estimate \eqref{EQ5}, we multiply equation \eqref{EQ4} by $\Delta \varphi$ and integrate in space 
\begin{equation}
\int_{Q} \partial_t|\nabla \varphi |^2 + \int_{Q} (\lambda (t)+\varepsilon )|\Delta \varphi|^2=0,
\end{equation} 
we integrate in time, and considering \eqref{EQQ6} we get
\begin{equation}
\int_{Q} |\nabla \varphi(t) |^2 + C \varepsilon \int_{0}^t\int_{Q}|\Delta \varphi|^2 \leq \int_{Q} |\nabla \varphi (0)|^2,
\end{equation}
thus \eqref{EQ5}. We omit here the investigation of the existence and uniqueness, for the reader reference's see \cite{Lad}. This finishes the proof of Lemma \ref{lemma2}.  
\end{proof}
Now we proceed to Proof Theorem \ref{Theo-conv-rate}\\
\textbf{Proof of Theorem \ref{Theo-conv-rate}}
 
 We consider  $W(t)=n_{\kappa}(t)-n_{\kappa'}(t)$, and $Z(t)=c_{\kappa}(t)-c_{\kappa'}(t)$, for $0<\kappa'< \kappa$. Then these functions satisfy the following equations  
\begin{equation}\label{EQ6}
\partial_t(W(t))- \Delta (\lambda(t)W(t))= R(t)= G(c_{\kappa}- c_{\kappa'})(n_{l,\kappa}-n_{l,\kappa'})-\mu (n_{d,\kappa}-n_{d,\kappa'}),
\end{equation}
and 
\begin{equation}\label{EQ7}
\partial_t(Z(t))- d\Delta (Z(t))= R_3(t)= -f(c_{\kappa}- c_{\kappa'})(n_{l, \kappa}-n_{l, \kappa'}).
\end{equation}
Let $\varphi_0 \in C^\infty_0(Q)$ be an arbitrary function,  and $\varphi$ be a solution of the problem \eqref{EQ4}. We test equation  \eqref{EQ6} against  $\varphi(t)$ and integrate in space  and time we obtain
\begin{align*}
\int_{Q} W(t) \varphi(t)dx- \int_{Q} W(0)\varphi(0)dx & + \int_0^t\int_{Q} \left( \partial_t \varphi -\lambda(\xi)\Delta \varphi \right) W(\xi) d\xi dx \\
&= \int_0^t\int_{Q} \varepsilon \Delta \varphi  W(\xi)d\xi dx + \int_{0}^t\int_Q R(\xi)\varphi d\xi dx, 
\end{align*} 
where we used the fact that $\varphi(t)$ solves \eqref{EQ4}. Using Young's inequality and estimate \eqref{EQ5'}

\begin{align*}
\int_{Q} W(t) \phi(t) &\leq  \sqrt{\varepsilon} \left( \int_0^t\left(\|W(t)\|_{L^2(Q)}^2  + \varepsilon \|\Delta \varphi\|^2_{L^2(Q)}\right) d\xi  \right)\\
& + \|\varphi_0\|_{L^\infty(Q)} \left( \|W(0)\|_{L^1(Q)} + \int_{0}^t \|R(\xi)\|_{L^1(Q)} d\xi\right).
\end{align*}
Now taking the limit $\varepsilon \to 0^+$ and using estimate \eqref{EQ5'}, we derive
\begin{equation}\label{EQ8}
\int_{Q} W(t) \phi(t) \leq \|\varphi_0\|_{L^\infty(Q)} \left( \|W(0)\|_{L^1(Q)} + \int_{0}^t \|R(\xi)\|_{L^1(Q)} d\xi\right),
\end{equation}
for every $\varphi_0 \in C^\infty_0(Q)$. Taking now any function  $\varphi _0\in L^\infty(Q)$ and approximate it by a sequence $\varphi_{0}^n\in C^{\infty}_0(Q)$ such that
 $\|\varphi _{0}^n\|_{\infty}\leq \|\varphi_0\|_{L^\infty(Q)}$ and $\|\varphi_{0}^n -\varphi_0\|_{L^1} \to 0 $ as $n\to \infty$
thus \eqref{EQ8} is valid for every $\varphi_0\in L^\infty(Q)$. Therefore we get 
\begin{equation}
\|W(t)\|_{L^1(Q)} \leq  \|W(0)\|_{L^1(Q)} + \int_{0}^t \|R(\xi)\|_{L^1(Q)} d\xi,
\end{equation}
which implies 
\begin{equation}
\|n_{\kappa}(t) - n_{\kappa'}(t)\|_{L^1(Q)} \leq \|n_{\kappa}(0)-n_{\kappa'}(0)\|_{L^1(Q)}+ C,
\end{equation}
from \eqref{EQ9}, $n_{\kappa}$  converge in the strong topology of $C(0,T; L^1(Q))$ to $n_{0}$ (due to the continuous embedding $C(0,T; L^2(Q))\hookrightarrow C(0,T; L^1(Q))$). Consequently, taking $\kappa' \to 0$ we deduce the following rate of the convergence in $L^1(Q)$
\begin{equation}
\|n_{\kappa}(t)-n_{0}(t)\|_{L^1(Q)} \leq \|n_{\kappa}(0)-n_{0}(0)\|_{L^1(Q)} +C,
\end{equation}
where $C$ is a positive constant defined as 
\begin{equation}
C=G_m t \|n_{\kappa}\|_{L^1(Q)}.
\end{equation}
The second estimate of the convergence rate \eqref{Conv-c} can be proved analogously.

\bibliography{sample}

\end{document}